\documentclass[1p]{elsarticle_modified}

\journal{CGP16037}

\usepackage{amsfonts}
\usepackage{amssymb}
\usepackage{amsmath}
\usepackage{amsthm}
\usepackage{amsbsy}

\usepackage{float}
\usepackage{kotex}

\usepackage{paralist}

\usepackage{caption,subcaption}

%\usepackage{caption}
%\numberwithin{figure}{section}

% \allowdisplaybreaks

%\usepackage[hidelinks]{hyperref}
\usepackage[colorlinks]{hyperref}
\usepackage{color}

% \usepackage{lineno}
% \modulolinenumbers[5]

\usepackage{graphicx}
%%%%%%%%%%%%%%%%%%%%%%%%%%%%%5
%%%%%%%%%%%%%%%%%
%underline package
\usepackage{ulem}
\normalem
%%%%%%%%%%%%%%5

\usepackage[scr=dutchcal]{mathalfa}

%%%%%%%%%%%%%%%%%%%%%5
\newcommand{\stkout}[1]{\ifmmode\text{\sout{\ensuremath{#1}}}\else\sout{#1}\fi}

%%%%%%%%%%%%%%%%%%%%%%%

\newtheorem{thm}{Theorem}[section]
\newtheorem{cor}[thm]{Corollary}
\newtheorem{prop}[thm]{Proposition}
\newtheorem{lem}[thm]{Lemma}
\newtheorem{conj}[thm]{Conjecture}

\theoremstyle{definition}

\theoremstyle{remark}
\newtheorem{rem}[thm]{Remark}

\theoremstyle{proof}

\def\Pcal{{\mathcal{P}}}

\DeclareMathOperator{\st}{\mathscr{st}}

\begin{document}

\begin{frontmatter}

\title{ A classification of polyhedral graph \\ by combinatorially rigid vertices}

%% Group authors per affiliation:
\author{Yunhi Cho} %\fnref{y_cho}}
\address{Department of Mathematics, University of Seoul, Seoul 02504 , Korea}
\ead{yhcho@uos.ac.kr}
%\fntext[y_cho]{The first author was supported by the 2014 sabbatical year research grant of the University of Seoul.}

\author{Seonhwa Kim\fnref{s_kim}}
\address{Center for Geometry and Physics, Institute for Basic Science, Pohang 37673, Korea}
\fntext[s_kim]{The second author was supported by IBS-R003-D1.}
\ead{ryeona17@ibs.re.kr}

\begin{abstract}
 When the number of  non-triangular faces adjacent to a vertex $v$ is less than or equal to three,  the vertex  $v$ will be called (\emph{combinatorially}) \emph{rigid}. 
 We study the number of rigid vertices and suggest  a conjecture on  a classification of polyhedra. 
\end{abstract}

\begin{keyword}
Polyhedral graph, Rigid vertex,  Reducible polyhedron 
\MSC[2010] 52B10 \sep  	52C25 \sep 05C10 
\end{keyword}

\end{frontmatter}

%\linenumbers

\tableofcontents

\section{Introduction}\label{intro}

A \emph{polyhedral graph} $P$ is a planar graph given by the 1-skeleton of a strictly-convex Euclidean polyhedron. Equivalently, a polyhedral graph is a 3-connected planar graph with no loops and multiple edges by Steinitz's theorem (for a reference, see \cite{ziegler_lectures_1995}). In this article, a strictly-convex Euclidean realization of a polyhedral graph is  called simply by a \emph{polyhedron}. We assume that the ambient space containing polyhedral graphs is a $2$-sphere and we say that two polyhedral graphs are the \emph{same} if there is a plane isotopy between them. 

The non-triangular degree  of a vertex $v$ is the number of  non-triangular faces adjacent to $v$. 
A vertex $v$ of $P$ is \emph{combinatorially rigid} if the non-triangular degree of $v$ is less than or equal to 3. 
Note that this definition is given  purely combinatorially but the property is related to a  rigidity phenomenon of spherical vertex figures of a geometric 3-dimensional polyhedron (See Section \ref{sec:geometricrigid}). 
For the sake of convenience, we will omit the term `\emph{combinatorially}' in the article unless necessary. 

An important and direct consequence from the definition of a rigid vertex  is Lemma \ref{existencerigid}: there exists a rigid vertex for any polyhedral graph. The same combinatorial idea   was also used   in Lemma 18, \cite{montcouquiol_deformations_2012} and the similar statement however might have previously appeared  although the authors couldn't find an earlier reference. 

We would like think about the notion of a rigid vertex as follows.
If $P$ is \emph{simple} or \emph{simplicial} which means that all vertices are 3-valent or  all faces are 3-gonal respectively, then all vertices in $P$ are obviously rigid. A question naturally aries : how many non-rigid vertices can be in a polyhedral graph?
In summary, our study shows that if we restrict the number of rigid vertices, it is also restrictive to find distinct polyhedra and so we ask a question in Conjecture \ref{finiteconj} whether there are only finite \emph{irreducible} polyhedra under the fixed number of rigid vertices. We expect that the study on rigid vertices may help to understand polyhedral graphs as how far it is from both extremes: simple and simplicial.

%Since  general polyhedra have much more complicated combinatorics, 
%simplicial or simple  cases may be  easier to address than the other  cases and sometimes we even  restrict our interest to only simple or simplicial polyhedra.  
%%
%In this situation,
%we may think of the number of non-rigid vertices of $P$ as a kind of litmus to see how far $P$ is from both extremes: simple and simplicial.   
%%

Here is the first main result of classifying polyhedral graphs by the number of rigid vertices. 

\begin{thm}\label{mainclassification} 
	Let  $\Pcal_k$ be the set of polyhedral graphs with  $k$  rigid vertices. Then  we get the following  classification.

	\begin{enumerate}
		\item   For $k \leq 3$, $\Pcal _ k$ is the empty set.
		\item $  \Pcal _ 4  $ has  only one element, the  tetrahedron.
		\item $ \Pcal _ 5 $ has only two elements, the 4-gonal pyramid and the 3-gonal bipyramid.
		\item For a positive integer $k \geq 6$, $\Pcal_k$ is   infinite.
	\end{enumerate}
	
\end{thm}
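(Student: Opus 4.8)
The plan is to reduce all four parts to the interplay between two complementary lower bounds on the number of rigid vertices, expressed through the number of triangular faces. Write $V,E,F$ for the numbers of vertices, edges and faces, let $p_3$ be the number of triangular faces, let $A=F-p_3$ be the number of non-triangular faces, and let $r$ be the number of rigid vertices. First I would record the double count $S:=\sum_v(\text{non-triangular degree of }v)=\sum_{\text{non-tri }f}(\text{number of sides of }f)$, together with the elementary relations $S=2E-3p_3$ and $S\ge 4A$ (every non-triangular face has at least four sides). Since every \emph{non}-rigid vertex has non-triangular degree at least $4$, we also have $S\ge 4(V-r)$. Feeding these into Euler's formula $V-E+F=2$ and eliminating $E$ and $A$ gives, after a short manipulation, the first bound
\[ r \ge 2 + \tfrac{1}{2}\,p_3 . \]
Independently, the curvature identity $\sum_v(4-\deg v)+\sum_f(4-\text{size}\,f)=8$ (a restatement of Euler) shows, since only degree-$3$ vertices and triangular faces contribute positively, that $V_3+p_3\ge 8$, where $V_3$ is the number of degree-$3$ vertices. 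Because every degree-$3$ vertex is trivially rigid, $r\ge V_3\ge 8-p_3$, the second bound.

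Part (1) is then immediate: $r\ge\max\!\big(2+\tfrac12 p_3,\ 8-p_3\big)$, and the right-hand side is minimized at $p_3=4$, where both terms equal $4$. Hence $r\ge 4$ for every polyhedral graph, so $\Pcal_k=\varnothing$ for $k\le 3$. The same estimate pins the boundary: $r=4$ forces $p_3=4$, and $r=5$ forces $p_3\in\{3,4,5,6\}$, which is exactly what makes the finite classification possible.

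For part (2) I would run the equality discussion. When $r=4$ both bounds are tight, so all non-triangular faces are quadrilaterals, every rigid vertex has degree $3$ and is surrounded by three triangles, every non-rigid vertex has degree $4$ and is surrounded by four quadrilaterals, and there are exactly four triangles and four degree-$3$ vertices. Counting triangle-corners (degree-$4$ vertices touch no triangle) then shows the four triangles meet in the incidence pattern of the tetrahedron and already tile the sphere, leaving no room for quadrilaterals; hence $P$ is the tetrahedron. Part (3) follows the same philosophy, but now exactly one of the two bounds carries genuine slack, so one must treat each value $p_3\in\{3,4,5,6\}$ separately and reconstruct the admissible $3$-connected planar graphs on the correspondingly small vertex set, arriving at precisely the $4$-gonal pyramid ($p_3=4$) and the $3$-gonal bipyramid ($p_3=6$). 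I expect this reconstruction to be the main obstacle: once the inequalities no longer determine the degree and face data completely, ruling out every residual configuration requires a careful, if elementary and finite, combinatorial case analysis.

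Part (4) is constructive. For even $k=2n\ge 6$ I would take the \emph{barrel} $B_{n,m}$: a cylindrical quadrilateral grid of $m$ layers on $n$ columns, capped above and below by two $n$-gonal faces. Its top and bottom rings ($2n$ vertices) meet at most three non-triangular faces and are rigid, while each of the $(m-1)n$ interior-ring vertices meets four quadrilaterals and is non-rigid; thus $B_{n,m}$ has exactly $2n$ rigid vertices for every $m$, producing infinitely many pairwise non-isomorphic members of $\Pcal_{2n}$. For odd $k=2n+1\ge 7$ I would replace one $n$-gonal cap by a cone of $n$ triangles over a new apex: the apex is rigid and the modified ring stays rigid (now meeting only two quadrilaterals), raising the count to $2n+1$ while leaving the interior non-rigid, so varying $m$ again yields infinitely many elements of $\Pcal_{2n+1}$. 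Verifying that these barrels are $3$-connected and strictly-convex realizable is routine via Steinitz's theorem, which completes part (4).
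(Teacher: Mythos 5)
Your two counting bounds are exactly the paper's Lemma \ref{lem:1stIneqVrig}: $r\ge 2+\tfrac12 p_3$ is $F_3\le 2V^{\text{rig}}-4$ and $r\ge 8-p_3$ is $F_3\ge 8-V^{\text{rig}}$, so part (1) matches the paper, and your equality analysis for $r=4$ (all rigid vertices $3$-valent and totally triangular, all non-rigid vertices $4$-valent amid quadrilaterals, so the four triangles close up into a tetrahedron) is a correct and arguably cleaner substitute for the paper's route through the tetrahedron criterion of Lemma \ref{lem:2ndIneqVrig}. Part (4) is also sound: your barrels with $n=3$ are precisely the paper's iterated $3$-gonal prisms, and your coned barrel handles odd $k$ where the paper instead applies a triangle-subdivision move to a triangle whose corners are all rigid; both constructions are valid, and realizability of the barrels is indeed routine.

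The genuine gap is in part (3). You propose to ``reconstruct the admissible $3$-connected planar graphs on the correspondingly small vertex set,'' but nothing in your inequalities bounds the vertex set: for $r=5$ they only give $p_3\in\{3,\dots,6\}$ and place no bound on the number of non-rigid vertices. Indeed the numerical profile $V_3=2$, $V_4=n+3$, $F_3=6$, $F_4=n$ satisfies Euler's formula, the incidence identities, and both of your bounds for \emph{every} $n\ge 0$ (similarly $V_3=4$, $V_4=n+1$, $F_3=4$, $F_4=n+1$), so your ``elementary and finite'' case analysis actually ranges over an infinite family of candidate face vectors; also, for $p_3\in\{4,5\}$ both bounds have slack, contrary to your claim that exactly one does. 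Two structural inputs are needed to close this, and they are the paper's: first, Proposition \ref{prop:p5_v5f5zero} eliminates $V_{\ge5}$ and $F_{\ge5}$ — your inequalities alone do not, since e.g.\ a $5$-valent vertex is numerically consistent with $r=5$ — by invoking the criterion of Lemma \ref{lem:2ndIneqVrig}, whose proof uses the existence of a totally triangular $3$-valent vertex together with $3$-connectedness via Steinitz's theorem; second, the segregation-plus-connectivity argument of Theorem \ref{thm:p5}: the triangle-corner count is tight, so every rigid vertex meets only triangles while every non-rigid $4$-valent vertex meets only quadrilaterals, whence any graph with $n>0$ would be disconnected, forcing $n=0$ and leaving only the $4$-gonal pyramid and the $3$-gonal bipyramid. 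Without an argument of this kind, your part (3) does not terminate.
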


\begin{figure}[h]
	{
		$$
		\begin{array}{c|c|c|c}
		
		\Pcal_k \text{ for } k\leq3 & \Pcal_4 & \Pcal_5 & \Pcal_k \text{ for } k\geq 6 \\
		\hline
		\varnothing ~~&~~	\vcenter{\hbox{\includegraphics[scale=0.6]{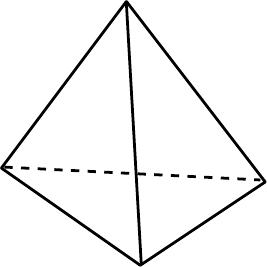}} } ~~~~&~~~
		\vcenter{\hbox{\includegraphics[scale=0.6]{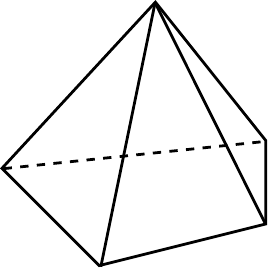}}} ~~,~~~ \vcenter{\;
			\hbox{\includegraphics[scale=0.6]{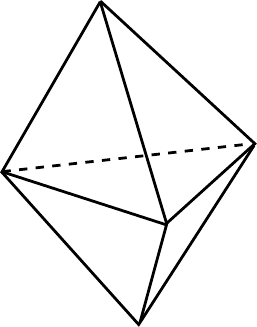}}}  ~~~&~~~
		
		infinite 
		\end{array}
		$$
	}	
	\caption{There are only three polyhedra if the number of  rigid vertices is less than 6.}
\end{figure}

We obtain two additional results as follows, which will be discussed in Section \ref{sec:add}. 
%This corollary is an immediate consequence of Theorem \ref{lowerbound} as a contraposition.
\begin{thm}\label{ninevertex}
	If a polyhedron   has  fewer than 9 vertices, then all vertices are rigid.
\end{thm}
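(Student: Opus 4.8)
The plan is to prove the contrapositive: assuming a polyhedron $P$ has a non-rigid vertex $v$, I would show that $P$ must have at least $9$ vertices. The natural bookkeeping device is to partition the vertices of $P$ into the three disjoint pieces $\{v\}$, the neighborhood $N(v)$, and the remaining set $W$, so that $|V(P)| = 1 + |N(v)| + |W|$; the whole argument reduces to bounding the last two terms below by $4$ each. The bound on $|N(v)|$ is immediate: the faces incident to $v$ correspond cyclically to the edges at $v$, so their number is exactly $\deg(v)$, and since at least four of them are non-triangular we get $\deg(v) \ge 4$, i.e. $|N(v)| \ge 4$.

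The heart of the argument is to extract at least four \emph{distinct} vertices lying in $W$, one from each non-triangular face at $v$. For a non-triangular face $f$ incident to $v$, its boundary is a cycle of length $\ge 4$ through $v$; traversing $\partial f$ from one $v$-neighbor to the other without passing through $v$ produces at least one interior vertex $w_f$. I claim $w_f \in W$. Certainly $w_f \neq v$, and $w_f$ cannot be a neighbor of $v$ either: a face of a polyhedral graph is bounded by an induced (chordless) cycle, so if $w_f$ were adjacent to $v$ the edge $v w_f$ would be a chord of $\partial f$, which is impossible. I would then check that the chosen vertices are distinct as $f$ ranges over the non-triangular faces at $v$. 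Both $f$ and $f'$ contain $v$, so if they shared $w_f = w_{f'} =: w \in W$ they would meet in the two distinct vertices $v$ and $w$; but two faces of a convex $3$-polytope — equivalently of a polyhedral graph, via Steinitz's theorem \cite{ziegler_lectures_1995} — intersect in at most a single vertex or a single edge, and having two common vertices would force the intersection to be the edge $vw$, contradicting $w \in W$ (since $vw \notin E$). Hence $|W|$ is at least the number of non-triangular faces at $v$, which is $\ge 4$.

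Combining the two estimates gives $|V(P)| = 1 + |N(v)| + |W| \ge 1 + 4 + 4 = 9$, so any polyhedron with fewer than $9$ vertices has all of its vertices rigid. I expect the only delicate point to be the distinctness step, whose validity rests entirely on the fact that two faces of a polyhedral graph share at most one edge; once that standard polytopal property is invoked (together with the chordless-cycle description of faces used just above), the remainder is a clean three-part vertex count with no case analysis. As a consistency check that no slack was lost, the bound is sharp: a $4$-valent apex surrounded by four quadrilaterals, closed off below by four triangles and a square, is a $9$-vertex polyhedron whose apex is non-rigid.
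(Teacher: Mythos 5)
Your proof is correct and follows essentially the same route as the paper's: the paper counts the vertices of the $2$-star of a non-rigid vertex $v$ as $1 + \deg(v) + (\text{one extra vertex per non-triangular face}) \geq 1+4+4=9$, which is exactly your decomposition into $\{v\}$, $N(v)$ and $W$. The only difference is that the paper justifies the distinctness of these vertices by appealing to a convex geometric realization, whereas you justify it combinatorially via chordless face boundaries and the fact that two faces meet in at most one edge --- a more carefully argued version of the same step.
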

\begin{thm}\label{symmetryrigid}
	A Euclidean convex polyhedron with regular faces  has only rigid vertices.
\end{thm}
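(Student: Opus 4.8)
The plan is to translate the combinatorial notion of non-triangular degree into a statement about the interior face angles meeting at a vertex, and then to exploit the strict convexity of the realization. First I would recall the elementary fact that a regular $n$-gon has interior angle $\bigl(1 - \tfrac{2}{n}\bigr)\pi$. Consequently every \emph{non}-triangular regular face, i.e. one with $n \geq 4$, contributes an angle of at least $\tfrac{\pi}{2}$ at each of its vertices, with equality exactly when the face is a square. Triangular faces ($n=3$) contribute $\tfrac{\pi}{3}$, but their exact value will not matter; only positivity is used.

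Next I would invoke the standard consequence of strict convexity: at every vertex $v$ of a strictly-convex Euclidean polyhedron the angular defect is strictly positive, so that the sum of the interior angles of all faces incident to $v$ is strictly less than $2\pi$. This is precisely the geometric input that distinguishes an honest convex vertex from a flat vertex figure, and it is the only place where the hypothesis ``Euclidean convex polyhedron'' enters.

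The counting step is then immediate. Fix a vertex $v$ and let $m$ denote its non-triangular degree. Since the angles contributed by the triangular faces at $v$ are nonnegative, the sum of the angles of the $m$ non-triangular faces at $v$ is at most the total angle at $v$, which is $< 2\pi$. As each such face contributes at least $\tfrac{\pi}{2}$, we get $m \cdot \tfrac{\pi}{2} \leq \sum (\text{non-triangular angles}) < 2\pi$, and hence $m < 4$, i.e. $m \leq 3$. Thus $v$ is rigid, and since $v$ was arbitrary the whole polyhedron has only rigid vertices.

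There is no serious obstacle here; the single point requiring care is the \emph{strictness} of the angle-sum inequality. Four squares around a vertex yield a total angle of exactly $2\pi$, a flat (hence non-strictly-convex) vertex figure, so the bound $m \leq 3$ genuinely relies on the strict-convexity assumption built into the definition of a polyhedron rather than on mere planarity. I would therefore make sure this borderline configuration is ruled out explicitly, noting in particular that any realization with four or more non-triangular faces at a vertex would have angular defect $\leq 0$ there and so could not be a strictly-convex polyhedron.
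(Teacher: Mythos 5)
Your proposal is correct and is essentially the paper's own argument: both bound each non-triangular regular face's angle at $v$ below by $\pi/2$, so four or more such faces would force an angle sum of at least $2\pi$, contradicting strict convexity. Your explicit attention to the borderline case of four squares is a nice touch but does not change the substance.
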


In the proof of Theorem \ref{mainclassification}, we glue  two polyhedra together along a face in order  to obtain an infinite family of polyhedra with fewer than a fixed number of rigid vertices. 
We predict that this is an essential way to produce an infinite number of polyhedra with the number of rigid vertices fixed. Let us see the statement more precisely. 

A polyhedral graph $P$ is \emph{reducible}\footnote{It might be better to use the term \emph{decomposable}. But the term \emph{decomposable} is already used in several areas. Typically, it has been commonly used in the sense of Minkowski sums. Even in rigidity context like 
	\cite{ConnellySchlenker}, it is also used for a special kind of non-convex polyhedra which can be decomposed into tetrahedra without adding vertices. Therefore we will use the term \emph{reducible}.}  if $P$ is the 1-skeleton graph of a polyhedron obtained from two polyhedra glued together along a face and  $P$ is \emph{irreducible} otherwise.
From the  point of view of planar graphs, $P$ is \emph{reducible} means that  $P$ is decomposed  into  $P_1$ and $P_2$ along a {separating edge-path cycle}. When we consider an edge-path cycle $C$ on $P$, there are two connected regions  separated by $C$. For $P_1$ and $P_2$ to be  polyhedral graphs, there is an adjacent edge toward each region at each vertex in $C$. Hence all vertices in separating cycle should be at least  $4$-valent.

Note that  irreducibility is combinatorial. More precisely, if we consider a separating cycle in a realization, it may be coplanar or not, i.e.  $P = P_1 \cup P_2$ and  the intersection $P_1 \cap P_2$ is a planar $n$-gon  or a convex hull of the separating cycle which has a positive volume as in Figure \ref{fig:separingcycle}.

\begin{figure}[H]
	$$\vcenter{\hbox{\includegraphics[scale=0.6]{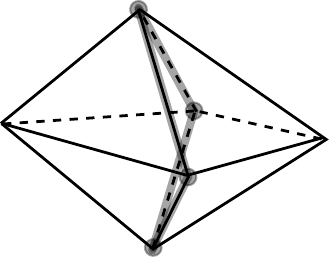}  }}P
	~=~~
	\vcenter{\hbox{\includegraphics[scale=0.6]{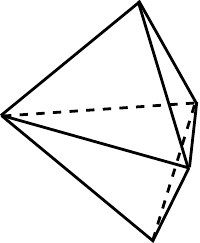}  }}P_1 \cup~~
	\vcenter{\hbox{\includegraphics[scale=0.6]{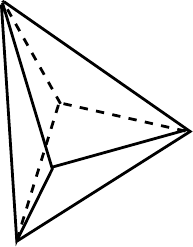}  }}P_2 ,~~~ P_1\cap P_2  =
	\vcenter{\hbox{\includegraphics[scale=0.6]{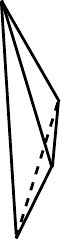}  }}$$
	\caption{A reducible polyhedron $P$ is decomposed into $P_1$ and $P_2$, where the seperating cycle is not coplanar.}
	\label{fig:separingcycle}
\end{figure}
For a reducible polyhedron,
we can increase the number of non-rigid vertices to an unlimited extent under the restriction of  the number of  rigid vertices, as  attaching an intermediate $n$-gonal prisms along the separating cycle, as in Figure \ref{fig:iteratedprism}.
But  it is not easy to  produce different polyhedral graphs of fixed number of rigid vertices  if we restrict to only irreducible polyhedra. We checked this prediction through further classifications as in Remark \ref{rmk:p6}. 
Let  $\Pcal_k^\text{irr}$ be the set of all irreducible polyhedral graph with $k$ rigid vertices. We conjecture the following.
\begin{conj}\label{finiteconj}
	Each $\Pcal_k^\text{irr}$ is finite. 
\end{conj}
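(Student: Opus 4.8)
The plan is to prove finiteness by bounding the number of vertices: since there are, up to plane isotopy, only finitely many $3$-connected planar graphs on a bounded vertex set, it suffices to show that every irreducible $P \in \Pcal_k^{\mathrm{irr}}$ has at most $f(k)$ vertices for some function $f$.

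First I would reduce the vertex bound to a bound on the number $N$ of non-triangular faces by a direct Euler count. Write $V,E,F$ for the numbers of vertices, edges, and faces, and $s_f$ for the size of a face $f$. By definition exactly $k$ vertices are rigid, and the remaining $V-k$ vertices have non-triangular degree $\ge 4$, hence degree $\ge 4$ as well. Counting incidences between vertices and non-triangular faces gives $\sum_{s_f \ge 4} s_f = \sum_v (\text{non-triangular degree of } v) \ge 4(V-k)$, while the spherical identity $\sum_f (6 - 2 s_f) + \sum_v (6 - \deg v) = 12$ rewrites the left-hand sum as $3N + 3V - E - 6$. Combining these with $2E = \sum_v \deg v \ge 4V - k$ yields $V \le N + \tfrac{3}{2}k - 2$. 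Thus finiteness of $\Pcal_k^{\mathrm{irr}}$ follows once $N$ is bounded in terms of $k$.

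Second --- and this is where irreducibility must enter --- I would bound $N$. Irreducibility is genuinely essential: the stack of $m$ copies of an $n$-gonal prism glued along their $n$-gonal faces has exactly $2n$ rigid vertices (the top and bottom rings) for every $m$, yet $N = mn + 2 \to \infty$, so $N$ is unbounded among reducible members of $\Pcal_{2n}$. The intended mechanism is that a large number of non-triangular faces forces them to accumulate into a long \emph{band} (as in the prism stack), and the core of such a band is a separating edge-path cycle all of whose vertices lie on non-triangular faces on both sides, hence are $\ge 4$-valent, contradicting irreducibility. One would organize the non-triangular faces and the intervening triangulated patches into a decomposition of the sphere, use the fact that the only vertices of non-triangular degree $\le 3$ are the $k$ rigid ones, and argue that the caps and branch points of the bands are controlled by these $k$ vertices, so that only boundedly many bands occur and each must be short.

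The main obstacle is exactly this last step: extracting from an accumulation of non-triangular faces an \emph{honest} separating cycle in $P$ whose vertices are all $\ge 4$-valent and whose two sides are both nontrivial polyhedral graphs (each with at least four vertices). The difficulties are that non-triangular faces may be large or may branch and merge rather than forming a clean cylinder, that triangulated patches may be large (though pinned down by their boundary rigid vertices), and that one must certify both sides of the candidate cycle are valid polyhedra rather than a single cycle bounding nothing. A plausible route is induction: repeatedly split $P$ along separating cycles into irreducible pieces, and show that unboundedly many non-triangular faces inside an irreducible piece force one further admissible split, contradicting irreducibility. Turning this heuristic into a rigorous extraction of separating cycles is the crux, and is why the statement is posed here as a conjecture rather than a theorem.
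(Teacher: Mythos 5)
The statement you are asked about is posed in the paper as a \emph{conjecture}: the paper offers no proof of it, and neither do you. Your first reduction is correct and worth recording: the incidence count $\sum_{s_f\ge 4} s_f \ge 4(V-k)$, combined with the Euler-type identity and $2E\ge 4V-k$, does give $V\le N+\tfrac{3}{2}k-2$ (I checked the arithmetic), so finiteness of $\Pcal_k^{\text{irr}}$ would indeed follow from a bound on the number $N$ of non-triangular faces in terms of $k$ alone. Your prism-stack example also correctly shows that any such bound must use irreducibility, since $N$ is unbounded among reducible members of $\Pcal_{2n}$.

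The genuine gap is the entire second half: you never produce, from the hypothesis that $N$ is large, an actual separating edge-path cycle all of whose vertices are at least $4$-valent and whose two sides are both polyhedral graphs. The ``band'' picture is a heuristic borrowed from the prism stack, but nothing in the proposal rules out configurations where the non-triangular faces branch, merge, or share vertices with large triangulated regions in ways that admit no clean cylindrical core; nor is there any argument that a candidate cycle, once found, splits $P$ into two pieces that are each $3$-connected planar graphs on at least four vertices (the paper's notion of reducibility requires both sides to be polyhedral graphs). You acknowledge this yourself in your final sentence. As it stands the proposal is a reduction of the conjecture to a different open statement ($N$ is bounded by a function of $k$ for irreducible $P$), not a proof, and it should be presented as such.
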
  

Finally, we present the following classification table by Theorem \ref{mainclassification} and Remark \ref{rmk:p6}.

\begin{figure}[H]
	{
		$$
		\begin{array}{c|c|c|c|c}
		
		\Pcal_4^\text{irr} & \Pcal_5^\text{irr} & \Pcal_6^\text{irr} & \Pcal_7^\text{irr} &\cdots \\
		\hline
		\vcenter{\;\hbox{\includegraphics[scale=0.6]{tetrahedron.pdf}} } &
		\vcenter{\;\hbox{\includegraphics[scale=0.6]{4pyramid.pdf}}}
		&
		\vcenter{\hbox{\includegraphics[scale=0.6]{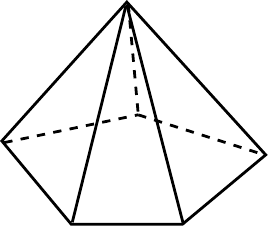}}}
		,~ \vcenter{\hbox{\includegraphics[scale=0.6]{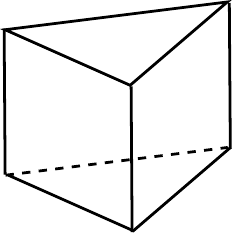}  }}
		,~
		\vcenter{\hbox{\includegraphics[scale=0.6]{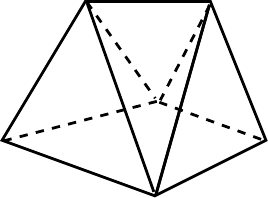}  }}
		,~
		\vcenter{\hbox{\includegraphics[scale=0.6]{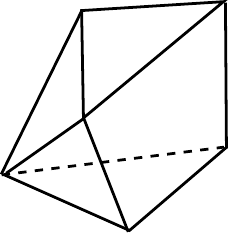}  }}
		&
		\vcenter{ \hbox{finite?} \; \; \; \hbox{~~or} \; \; \hbox{infinite?}}
		&
		?
		\end{array}
		$$
	}	
	\caption{$|\Pcal_{\leq 3}^\text{irr}| = 0, |\Pcal_4^\text{irr}| = 1, |\Pcal_5^\text{irr}| = 1 \text{ and } |\Pcal_6^\text{irr}| = 4$.}
\end{figure}

\section{Combinatorially rigid vertex and spherical figure at a vertex} \label{sec:geometricrigid}
In this section, we briefly review the relation between combinatorially rigid vertices and geometric realizations of  polyhedral graph. 
Many parts are from the authors' other paper \cite{CK2016}. 
Let $V$, $E$ and $F$ be the number of vertices, edges and faces respectively. Let $V_k$ or $F_k$ the  the number of $k$-valent vertices or $k$-gonal faces respectively.
Let us begin  with reviewing a well-known lemma (for example, see p.237 \cite{grunbaum_convex_2003}). 

\begin{lem}
	\label{lem:v3f3} 
	For every polyhedral graph $P$, we have
	$$ V_3 + F_3 = \sum_{n\geq5} (n-4)(V_n + F_n) + 8.$$
\end{lem} 
\begin{proof}
	Each edge is adjacent to  vertices and faces exactly twice respectively. Hence we get  
	\begin{equation}\label{eq:2ev3f3}
	2E=\sum_{n\geq3}n V_n =\sum_{n\geq3}n F_n.
	\end{equation}
	
	% 	 Because of a polyhedron,  $\deg (f) \geq 3$ and $\deg (v) \geq 3$ for any $v \in \Vbf$ and $f \in \Fbf$.  So we have,
	% $$V=\sum_{n\geq3}V_n \;\textrm{ and }\; F=\sum_{n\geq3}F_n.$$
	
	Recall  Euler's polyhedron formula $V - E + F =2$ and the following completes the proof.
	\begin{align*}
	V_3+F_3 &=   4E - 4V - 4F + V_3 + F_3 +8 \\
	&=   \sum_{n\geq3}n V_n + \sum_{n\geq3}n F_n -4\sum_{n\geq3}V_n  - 4\sum_{n\geq3}F_n +V_3 +F_3 +8 \\
	&= \sum_{n\geq5} (n-4)V_n + \sum_{n\geq5} (n-4)F_n + 8
	\end{align*}
\end{proof}

The following existence result is an easy consequence of the above lemma;  it is a starting point of the whole story.
\begin{lem}\label{existencerigid}
	For every  polyhedral graph $P$, there always exists a rigid vertex.
\end{lem}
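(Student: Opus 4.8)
The plan is to prove existence by an averaging (double-counting) argument that reads the conclusion straight off Lemma \ref{lem:v3f3}, as the phrase ``easy consequence'' suggests. Write $t(v)$ for the non-triangular degree of a vertex $v$, so that $v$ is rigid exactly when $t(v)\le 3$. A first instinct is that Lemma \ref{lem:v3f3} already gives $V_3+F_3\ge 8$, and that any $3$-valent vertex is automatically rigid (it meets only $3$ faces); but this settles only the case $V_3>0$ and produces no rigid vertex when $V_3=0$, where we know merely $F_3\ge 8$. So I would instead run a global count over all vertices.

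First I would count the incidences between vertices and non-triangular faces in two ways. Summing over vertices gives $\sum_v t(v)$, while summing over faces gives $\sum_{n\ge 4} n F_n$; since $2E=\sum_{n\ge3}nF_n$ by \eqref{eq:2ev3f3}, the face count equals $2E-3F_3$. Thus
$$\sum_v t(v) \;=\; \sum_{n\ge 4} n F_n \;=\; 2E-3F_3 .$$
The crux is then to bound the right-hand side against $4V$. Applying Euler's relation and the edge count exactly as in the proof of Lemma \ref{lem:v3f3}, the vertex contributions cancel and only the face terms survive, yielding the identity
$$4V-\sum_{n\ge4} n F_n \;=\; 2F_3+\sum_{n\ge5}(n-4)F_n+8 \;\ge\; 8 .$$
Hence $\sum_v t(v)=\sum_{n\ge4}nF_n\le 4V-8<4V$, so the average of $t(v)$ over the $V$ vertices is strictly below $4$. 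Consequently some vertex has $t(v)\le 3$, i.e.\ a rigid vertex exists.

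The step I expect to be the genuine content, rather than bookkeeping, is securing the slack term $+8$ in the displayed identity: an averaging argument only forces the minimum to be at most the average, so I must guarantee the average lies \emph{strictly} below the integer threshold $4$. This is precisely what the constant $8$ of Lemma \ref{lem:v3f3} supplies, and the care needed is to verify that, after invoking Euler's relation, all vertex contributions ($V_3$ together with $\sum_{n\ge5}(n-4)V_n$) cancel, leaving only the non-negative face quantities $2F_3+\sum_{n\ge5}(n-4)F_n$ beside the $+8$. One could equivalently argue by contradiction, assuming $t(v)\ge 4$ for all $v$ (which forces $V_3=0$) and deriving $\sum_v t(v)\ge 4V$ against the bound $\le 4V-8$; but once the identity above is in hand the direct averaging form is cleaner and needs no case split.
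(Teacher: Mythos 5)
Your proof is correct: the identity $4V-\sum_{n\ge4}nF_n=2F_3+\sum_{n\ge5}(n-4)F_n+8$ checks out via Euler's formula and $2E=\sum_n nF_n$, and the averaging step is sound. This is essentially the paper's argument viewed from the complementary side --- since the number of triangles at $v$ equals $\deg(v)-t(v)$, your count of non-triangular incidences is arithmetically the same double count the paper runs on triangular incidences (there phrased as a contradiction, with the same $+8$ from Lemma \ref{lem:v3f3} doing the work).
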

\begin{proof}
	Suppose there is no rigid vertex.
	It is obvious that $ V_3 =0 $ and  all vertices  meet  at most $\deg (v) -4 $ triangle faces. Therefore we get
	$$
	0<3F_3 \leq \sum_{n\geq 5} (n-4) V_n
	$$
	This inequality contradicts  the following inequality obtained by Lemma \ref{lem:v3f3}.
	$$ F_3 \geq \sum_{n\geq 5} (n-4)V_n + 8 $$ 
\end{proof}
The following lemma shows that a combinatorially rigid vertex is actually a rigid neighborhood in a geometric 3-space like Euclidean, hyperbolic or spherical 3-space. 
\begin{lem}
	\label{rigidvertexthm}
	Let $P$ and $Q$ be two polyhedra of the same polyhedral graph where corresponding dihedral angles coincide. If a vertex $v$ is combinatorially rigid, then there is an ambient isometry $\phi$  which transforms between sufficiently small neighborhoods at $v$, i.e. $\phi: B_\epsilon(v(P)) \rightarrow B_\epsilon(v(Q))$.
\end{lem}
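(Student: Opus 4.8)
The plan is to reduce the statement to the rigidity of the \emph{spherical vertex figure} (the link) at $v$ and then to count degrees of freedom. First I would observe that for sufficiently small $\epsilon$ the set $B_\epsilon(v(P))$ is exactly the intersection of the solid tangent cone at $v$ with the ball, so its congruence type in $\mathbb{R}^3$ is completely determined by the spherical polygon $L_v \subset S^2$ cut out by that cone. Since two convex spherical polygons with the same cyclic sequence of side lengths and interior angles are related by an isometry of $S^2$, equivalently by an ambient isometry of $\mathbb{R}^3$ fixing $v$, the map $\phi$ with $\phi(v(P)) = v(Q)$ exists if and only if $L_v(P)$ and $L_v(Q)$ are isometric. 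Now the vertices of $L_v$ are the edge-directions at $v$, its side lengths are the face angles of $P$ at $v$, and its interior angles are precisely the dihedral angles along the edges at $v$. As the dihedral angles and the combinatorial type of $L_v$ agree for $P$ and $Q$ by hypothesis, the whole problem collapses to showing that the face angles at $v$ are determined by the dihedral angles, under the rigid-vertex hypothesis.

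The second step is the dimension count that singles out the bound ``at most three''. Write $n = \deg(v)$. The space of spherical $n$-gons up to isometry of $S^2$ has dimension $2n-3$, and prescribing the $n$ interior angles (the dihedral angles) cuts out a family of dimension $(2n-3)-n = n-3$. Thus, after the dihedral angles are fixed, the link still enjoys $n-3$ residual degrees of freedom, and this is exactly the number of triangular faces we are guaranteed at $v$: the condition that $v$ is rigid means at most three of the $n$ faces are non-triangular, hence at least $n-3$ are triangular. The heart of the argument is that each triangular face incident to $v$ imposes one further, independent scalar constraint on $L_v$, so that the $n-3$ triangular faces exactly exhaust the residual freedom and pin the link down to a discrete set.

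The main obstacle is making precise and independent the constraint contributed by a single triangular face. A triangle $v\,u_i\,u_{i+1}$ is infinitesimally rigid as a plate, and together with the prescribed dihedral angles along $v u_i$, $v u_{i+1}$ and the far edge $u_i u_{i+1}$ it locks the corresponding side $\theta_i = \angle u_i v u_{i+1}$ of $L_v$ relative to its neighbours; the content that must be verified is that, over the $n-3$ triangular faces, these locking relations are linearly independent on the $(n-3)$-dimensional residual space. I would establish this by an infinitesimal rigidity (Jacobian) computation for the spherical figure, in the spirit of the spherical trigonometric identities developed in \cite{CK2016}, where the triangular faces play the role of the bars of a braced spherical linkage. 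Once the Jacobian is shown to be nonsingular precisely when at most three faces are non-triangular, the implicit function theorem yields local uniqueness of the link, and strict convexity of the polyhedra, hence of the spherical polygon $L_v$, upgrades this to the global statement $L_v(P) \cong L_v(Q)$, which produces the desired ambient isometry $\phi$.
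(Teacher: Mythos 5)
Note first that the paper itself does not prove Lemma \ref{rigidvertexthm}: it explicitly omits the argument and defers to \cite{CK2016}, so your proposal can only be judged on its own terms. Your reduction to the spherical vertex figure is the right first move and matches the viewpoint of Section \ref{sec:geometricrigid}: $B_\epsilon(v(P))$ is determined by the convex spherical polygon $L_v$ whose sides are the face angles at $v$ and whose interior angles are the dihedral angles along the edges through $v$. The genuine gap is in what you call the ``heart of the argument'': you never identify an actual equation that a triangular face imposes on $L_v$, and the mechanism you sketch does not exist locally. Whether $f_i=v\,u_i\,u_{i+1}$ is a triangle is not visible in the link of $v$: the dihedral angle along the far edge $u_iu_{i+1}$ is the angle between $f_i$ and a face \emph{not} incident to $v$, so it constrains nothing in $L_v$; and the three dihedral angles along the edges of a triangular face do not determine its angle at $v$ (each adjacent face normal may still rotate about the normal of $f_i$ at fixed angular distance, and it is these azimuthal positions that fix the edge directions and hence $\angle u_i v u_{i+1}$). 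Consequently the family of convex spherical $n$-gons with the prescribed interior angles is $(n-3)$-dimensional regardless of which faces happen to be triangles, and the Jacobian you propose to prove nonsingular has no defined system of target equations. The information that pins down the side $\ell_i$ must come from outside the $v$-local picture --- either from the triangle's edge lengths, which determine its shape and hence its angle at $v$ (note that the companion statement, Theorem \ref{EDrigidity}, assumes edge lengths as well as dihedral angles), or by propagating already-determined face angles from the links of $u_i$ and $u_{i+1}$ --- and your proposal supplies neither.

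A secondary issue: even if each triangular face did contribute an independent constraint, a nonsingular Jacobian plus the implicit function theorem yields only local uniqueness, and ``strict convexity upgrades this to the global statement'' is an assertion, not an argument. The standard way to obtain the global conclusion, and the real reason the threshold is exactly three, is the Legendre--Cauchy arm lemma for convex spherical polygons: if two such polygons have equal corresponding interior angles, then the cyclic sequence of signs of their side-length differences is either identically zero or has at least four sign changes. Once the sides corresponding to triangular faces are known to agree, at most three sides can differ, so four sign changes are impossible and all sides agree; this gives $L_v(P)\cong L_v(Q)$ globally with no dimension count or Jacobian at all. I would restructure the proof around that lemma and be explicit about where the agreement of the ``triangular'' sides actually comes from.
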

As an application of rigid vertices, we can prove the following rigidity theorem using an essentially different method from Stoker's proof.
\begin{thm}[J. Stoker \cite{stoker_geometrical_1968} ]\label{EDrigidity} 
	Let P be a polyhedral graph. 
	If its edge lengths and dihedral angles are given, then all strictly-convex realizations are isometric to each other. 
\end{thm}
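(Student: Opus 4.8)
The plan is to show that the combinatorial graph together with the prescribed edge lengths $\{\ell_e\}$ and dihedral angles $\{\theta_e\}$ pins down every vertex position up to a single ambient isometry, using local rigidity at rigid vertices (Lemma \ref{rigidvertexthm}) as the driving engine. First I would reduce the global claim to a \emph{propagation} statement. Let $P$ and $Q$ be two strictly-convex realizations of the same polyhedral graph with identical edge lengths and identical dihedral angles. By Lemma \ref{existencerigid} there is a rigid vertex $v_0$, and by Lemma \ref{rigidvertexthm} a sufficiently small neighborhood of $v_0$ in $P$ is isometric to one in $Q$; composing $Q$ with that ambient isometry, I may assume $P$ and $Q$ coincide near $v_0$, and hence—using that the edges emanating from $v_0$ have equal lengths—on the entire closed star of $v_0$. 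It then suffices to prove that the set $W$ of vertices at which $P$ and $Q$ agree, initialized as the closed star of $v_0$, is forced to exhaust the whole vertex set.

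Second, I would isolate the local moves that enlarge $W$. (a) A triangular face is rigid, so once two of its vertices lie in $W$ its third vertex is determined, since the triangle is reconstructed by SSS from the known edge lengths and folded into the unique plane fixed by the dihedral angle along the shared edge. (b) If a vertex $u \notin W$ is joined by edges to three affinely independent vertices of $W$, then $u$ is pinned by trilateration, the remaining reflective ambiguity being eliminated by strict convexity. (c) If every face around a rigid vertex $u$ already has an edge in $W$, then Lemma \ref{rigidvertexthm} pins the whole vertex figure of $u$ and hence all of its neighbors. Note that triangular face angles are already determined by the edge lengths, so the only angles not automatically fixed are those of the non-triangular faces; the role of move (c) is precisely to fix these non-triangular face angles at the rigid vertices.

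The main obstacle, and the heart of the argument, is to show that these moves never stall—that whenever $W$ is not yet everything, some move applies on its frontier. Here I expect to use that the graph is $3$-connected and drawn on the simply-connected $2$-sphere, so the reconstruction carries no holonomy, together with a frontier argument: along the boundary cycle between the determined and undetermined regions there must be a vertex whose incident undetermined faces are few enough—ultimately a rigid vertex situated on the frontier—so that one of (a)--(c) applies. Making this bookkeeping airtight, especially around long non-triangular faces several of whose vertices are non-rigid, is the delicate point; a natural device is a Cauchy-type sign count around the frontier cycle, or an induction that peels a rigid vertex off the undetermined region while preserving polyhedrality, so that Lemma \ref{existencerigid} can be reapplied to the smaller configuration.

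Finally, once $W$ contains all vertices, the two realizations coincide as point sets, which is exactly the desired ambient isometry. Strict convexity enters twice: to resolve the reflection ambiguity at each trilateration step, and to guarantee that the face-by-face folding is globally consistent rather than merely locally so. I would also remark that convexity cannot be dropped, as rigidity of this type is known to fail for non-convex realizations.
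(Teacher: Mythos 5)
The paper does not actually prove this theorem: it explicitly omits the proofs of Lemma \ref{rigidvertexthm} and Theorem \ref{EDrigidity} and defers them to \cite{CK2016}, saying only that the intended argument uses rigid vertices and is ``essentially different from Stoker's proof.'' So your proposal cannot be checked against an in-paper argument; it has to stand on its own, and it does not yet do so.

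The genuine gap is the one you yourself flag and then leave open: you never establish that the propagation cannot stall. The theorem lives or dies on that frontier argument, and ``a Cauchy-type sign count'' or ``an induction that peels a rigid vertex off the undetermined region while preserving polyhedrality'' are names of strategies, not arguments. The peeling idea is especially problematic, since deleting a region from a $3$-connected planar graph does not in general leave a polyhedral graph, so Lemma \ref{existencerigid} cannot simply be reapplied; and even though Theorem \ref{thm:leastVrig} guarantees at least four rigid vertices, nothing prevents all of them from lying inside the already-determined set $W$, leaving a frontier made entirely of non-rigid vertices and long non-triangular faces. That is precisely the configuration none of your moves handles: an $n$-gonal face with $n\geq 4$ is flexible, so its shape is not fixed by its edge lengths, its angles at non-rigid vertices are not supplied by any vertex figure, and no adjacent triangle need be available to fold across. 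Two smaller but real defects: move (a) as stated is insufficient, since two vertices of a triangle plus its edge lengths do not determine the third vertex --- you need the triangle to share an edge with an already determined \emph{face}, so that the dihedral angle fixes its supporting plane, which is a strictly stronger precondition; and move (c) requires the determined data around $u$ to rigidly pin a frame before the ambient isometry of Lemma \ref{rigidvertexthm} can be normalized to the identity. Until the no-stall claim is proved (or replaced by a different global mechanism, e.g.\ first determining all non-triangular face angles from the rigid vertex figures by a separate combinatorial argument), the proof is incomplete.
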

We omit the proofs for Lemma \ref{rigidvertexthm} and Theorem \ref{EDrigidity} because it would digress from the subject. Moreover
we remark that combinatorially rigid vertices can be enhanced  to deal with non-convex cases, called \emph{strong-rigid} vertex. See \cite{CK2016} for the proofs and the other details.

\section{A classification of polyhedral graphs by the number of rigid vertices}

This section is devoted to  the proof of Theorem \ref{mainclassification}. For the sake of convenience, let us introduce  new notation  $V^{\text{rig}}$ and $V^{\text{non}}$ which denote the number  of rigid vertices and non-rigid vertices respectively. For example, $V_3 = V_3^{\text{rig}}$ and $V_4= V_4^{\text{rig}} +V_4^{\text{non}} $.
We say a vertex $v$ is  \emph{totally triangular} if all adjacent faces are triangles.
Let us begin with the following lemma.

\begin{lem}\label{lem:1stIneqVrig} For every polyhedral graph, the following inequality holds.
	$$8-V^{\text{rig}}\leq F_3 \leq 2 V^{\text{rig}}-4.$$
\end{lem}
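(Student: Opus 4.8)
The plan is to prove the two inequalities separately. Both come from combining elementary vertex--face incidence counts with Euler's formula and Lemma \ref{lem:v3f3}, and I expect essentially all of the substance to sit in the upper bound; the lower bound will be immediate.

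For the lower bound $8-V^{\text{rig}}\le F_3$, I would first record the trivial remark that every $3$-valent vertex is rigid: it is adjacent to exactly three faces, so its non-triangular degree is at most $3$. Hence $V^{\text{rig}}\ge V_3$. Lemma \ref{lem:v3f3} then reads $V_3+F_3=\sum_{n\ge5}(n-4)(V_n+F_n)+8$, and since every term on the right is nonnegative we get $V_3+F_3\ge 8$. Combining the two facts gives $V^{\text{rig}}+F_3\ge V_3+F_3\ge 8$, which is exactly the asserted lower bound.

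For the upper bound $F_3\le 2V^{\text{rig}}-4$, the decisive step is to bound the number $V^{\text{non}}$ of non-rigid vertices from above by counting incidences between vertices and non-triangular faces. Let $I=\sum_{n\ge4}nF_n$ be the number of (vertex, non-triangular face) incidences. Counting from the vertex side, $I$ is the sum of the non-triangular degrees over all vertices, and by definition each non-rigid vertex contributes at least $4$. Thus $4V^{\text{non}}\le I$, so $V^{\text{non}}\le I/4$ and $V^{\text{rig}}=V-V^{\text{non}}\ge V-I/4$. Using the edge--face identity $2E=\sum_{n\ge3}nF_n=3F_3+I$ I would rewrite $I=2E-3F_3$ and substitute to obtain $2V^{\text{rig}}\ge 2V-E+\tfrac32 F_3$.

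It then remains only to feed in Euler's formula $V-E+F=2$ together with the trivial inequality $\sum_{n\ge4}(n-4)F_n\ge 0$. The latter is equivalent to $2E=3F_3+\sum_{n\ge4}nF_n\ge 3F_3+4(F-F_3)$, which after applying Euler becomes $F_3\ge 2F-2V+4$, i.e. $\tfrac12 F_3\ge F-V+2$. Substituting this into the bound above collapses everything: $2V^{\text{rig}}-F_3-4\ge (2V-E+\tfrac12 F_3)-4\ge (V-E+F)-2=0$. The one place where the hypothesis really enters is the incidence estimate $4V^{\text{non}}\le I$, so that is the conceptual crux; everything downstream is bookkeeping with the two standard identities $2E=\sum_n nF_n$ and $V-E+F=2$, so I anticipate no genuine obstacle beyond keeping the algebra straight and double-checking the direction of each inequality.
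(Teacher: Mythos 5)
Your proof is correct and rests on the same ideas as the paper's: the lower bound is verbatim the paper's argument ($V^{\text{rig}}\geq V_3$ combined with $V_3+F_3\geq 8$ from Lemma \ref{lem:v3f3}), and your key incidence estimate $4V^{\text{non}}\leq \sum_{n\geq4}nF_n$ is exactly the complementary form of the paper's count that a rigid vertex meets at most $\deg(v)$ triangles while a non-rigid one meets at most $\deg(v)-4$. The only difference is bookkeeping --- you close the upper bound with Euler's formula and $\sum_{n\geq4}(n-4)F_n\geq0$ directly, whereas the paper re-invokes Lemma \ref{lem:v3f3}; these are equivalent, and both chains of inequalities check out.
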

\begin{proof} The left inequality is  obtained by $V^{\text{rig}}+ F_3\geq V_3+F_3 \geq 8$.
	When we count the maximal  number of triangles at a vertex $v$ of $\deg(v)=n$, there are at most $n$ or $(n-4)$ triangles   for a rigid or nonrigid vertex  respectively. Therefore the total maximal number  is $ 3V_3 +4V_4^{\text{rig}}+5V_5^{\text{rig}}+\cdots +V_5^{\text{non}}+2V_6^{\text{non}}+3V_7^{\text{non}}+\cdots.$
	Since the counting is triply redundant  because each triangle has three corners, so we have
	\begin{align*}
	3F_3 &\leq 3V_3 +4V_4^{\text{rig}}+5V_5^{\text{rig}}+\cdots +V_5^{\text{non}}+2V_6^{\text{non}}+3V_7^{\text{non}}+\cdots\\
	&=4(V_3+V_4^{\text{rig}}+V_5^{\text{rig}}+\cdots)+(V_5+2V_6+3V_7+\cdots) -V_3\\
	&=4V^{\text{rig}}-V_3+\sum_{n\geq 5}{(n-4) V_n}.
	\end{align*}
	
	As applying the inequality  $F_3+V_3 \geq \sum_{n\geq 5}{(n-4) V_n} + 8 + 4V^{\text{rig}}-4V^{\text{rig}} $ from Lemma \ref{lem:v3f3}, we get
	$$F_3 \geq 4V^{\text{rig}}-V_3 +\sum_{n\geq 5}{(n-4) V_n}+8-4V^{\text{rig}}\geq 3F_3+8-4V^{\text{rig}}$$
	and
	it proves the right inequality.
\end{proof}
The above Lemma  gives the following theorem immediately.
\begin{thm}\label{thm:leastVrig}
	Every polyhedral graph has at least four rigid vertices. Therefore  $\Pcal _ k$ is the empty set for $k = 0,1,2,3$.
\end{thm}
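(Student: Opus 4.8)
The plan is to read off the bound $V^{\text{rig}} \geq 4$ directly from the two-sided estimate in Lemma \ref{lem:1stIneqVrig}, after which the emptiness of $\Pcal_k$ for $k \leq 3$ is immediate. Since that lemma already carries out the structural work—relating $F_3$ to $V^{\text{rig}}$ through Euler's formula and the triangle corner-counting argument—the theorem itself should reduce to pure arithmetic, with no further geometric or combinatorial input required.

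First I would invoke Lemma \ref{lem:1stIneqVrig} to obtain the chain
$$8 - V^{\text{rig}} \leq F_3 \leq 2V^{\text{rig}} - 4.$$
Discarding the middle term and comparing the two outer bounds yields
$$8 - V^{\text{rig}} \leq 2V^{\text{rig}} - 4,$$
which rearranges to $12 \leq 3V^{\text{rig}}$, hence $V^{\text{rig}} \geq 4$. This is precisely the assertion that every polyhedral graph carries at least four rigid vertices. For the second statement I would simply recall that $\Pcal_k$ is by definition the set of polyhedral graphs with exactly $k$ rigid vertices; the bound just established rules out any member with $k \in \{0,1,2,3\}$, so $\Pcal_k = \varnothing$ for each such $k$.

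There is essentially no obstacle remaining at this point, since the genuine content has already been packaged into Lemma \ref{lem:1stIneqVrig}: the lower bound $8 - V^{\text{rig}} \leq F_3$ stems from $V_3 + F_3 \geq 8$, and the upper bound $F_3 \leq 2V^{\text{rig}} - 4$ from the triply-redundant corner count of triangular faces. The only thing worth checking is that the two bounds are genuinely compatible, i.e. that the inequality chain is non-vacuous; the arithmetic above confirms this and, moreover, shows that $V^{\text{rig}} = 4$ is the sharp threshold, which is consistent with the tetrahedron being the unique element of $\Pcal_4$ identified in Theorem \ref{mainclassification}.
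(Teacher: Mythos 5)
Your proposal is correct and follows exactly the paper's own argument: combine the two bounds of Lemma \ref{lem:1stIneqVrig} to get $8 - V^{\text{rig}} \leq 2V^{\text{rig}} - 4$, hence $V^{\text{rig}} \geq 4$, and the emptiness of $\Pcal_k$ for $k \leq 3$ follows immediately.
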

\begin{proof}
	$8-V^{\text{rig}}\leq  2 V^{\text{rig}}-4$ in Lemma \ref{lem:1stIneqVrig} implies $ V^{\text{rig}} \geq 4$.
\end{proof}
We introduce a criterion using the number of rigid vertices so as  to determine whether a polyhedral graph is a tetrahedron or not. The following lemma plays a  crucial role in the classification.

\begin{lem}\label{lem:2ndIneqVrig} For a polyhedral graph $P$,
	the following two inequalities hold simultaneously if and only if  the polyhedron $P$ is a tetrahedron.
	\begin{enumerate}[\indent(a)]
		\item $-2V_3+4V^{\text{rig}}+\sum_{n\geq 5}{(n-4) V_n}<3F_3$
		\item $V_4^{\text{rig}}+V_5^{\text{rig}}+V_{\geq 6}<3$.
	\end{enumerate}
	
\end{lem}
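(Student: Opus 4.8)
The plan is to prove the biconditional by first establishing that the tetrahedron satisfies both inequalities, and then proving the converse by showing that both (a) and (b) together force the polyhedron to be a tetrahedron. For the easy direction, I would simply substitute the tetrahedron's data: it has $V_3 = 4$, $V^{\text{rig}} = 4$, $F_3 = 4$, no faces with $n \geq 5$, and $V_4^{\text{rig}} = V_5^{\text{rig}} = V_{\geq 6} = 0$. Then (a) reads $-8 + 16 + 0 = 8 < 12 = 3F_3$, which holds, and (b) reads $0 < 3$, which holds. So the tetrahedron lies in the intersection.

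The substance is the converse. The key observation is that inequality (a) is the \emph{strict} version of the inequality $3F_3 \leq 4V^{\text{rig}} - V_3 + \sum_{n\geq 5}(n-4)V_n$ derived in the proof of Lemma \ref{lem:1stIneqVrig} (rearranged so that $-2V_3$ appears, using $V_3 = V_3^{\text{rig}}$ and bookkeeping on the rigid/non-rigid split). The plan is to revisit that counting argument and identify exactly which slack terms were discarded to obtain the inequality; forcing strictness in (a) should pin down that \emph{every} rigid vertex of degree $n$ is totally triangular (attains its maximal $n$ triangles) while \emph{every} non-rigid vertex attains its maximal $n-4$ triangles, or some closely related saturation condition. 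In other words, (a) being strict means the triangle-counting bound is \emph{not} tight, which paradoxically restricts the combinatorial type severely because the inequality in Lemma \ref{lem:1stIneqVrig} is an equality for most faces.

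I would then combine this saturation information with condition (b), which asserts $V_4^{\text{rig}} + V_5^{\text{rig}} + V_{\geq 6} < 3$, i.e.\ at most two vertices that are either rigid of degree $4$ or $5$, or of degree at least $6$. Equivalently, almost every vertex is either a $3$-valent vertex or a non-rigid vertex of degree $4$ or $5$. The strategy is to feed this drastic restriction on the allowed vertex degrees back into the equations $2E = \sum n V_n = \sum n F_n$ and Euler's formula, together with the saturation forced by (a), and argue that the only consistent solution is $V = 4$, $E = 6$, $F = 4$ with all faces triangular --- the tetrahedron. I expect the main obstacle to be making the extraction of the saturation condition from the strict inequality (a) fully rigorous and then showing that the combination with (b) leaves no room for any polyhedron larger than the tetrahedron; one must carefully rule out small sporadic configurations (for instance pyramids or bipyramids) by checking that they violate either (a) or (b), and ensure the degree constraints from (b) cannot be satisfied by gluing on additional structure without breaking $3$-connectivity or the Euler count.
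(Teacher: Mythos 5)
Your easy direction is fine, but the converse as planned has a genuine gap in two places. First, the consequence you propose to extract from (a) is not what (a) actually gives. Comparing (a) with the corner-counting bound
$3F_3 \leq 3V_3 + 4V_4^{\text{rig}} + 5V_5^{\text{rig}} + \cdots + V_5^{\text{non}} + 2V_6^{\text{non}} + \cdots = 4V^{\text{rig}} - V_3 + \sum_{n\geq5}(n-4)V_n$,
inequality (a) only asserts that the total deficit from this maximum is strictly less than $V_3$; it does not force every vertex to attain its maximal number of incident triangles. Since each $3$-valent vertex that is \emph{not} totally triangular contributes a deficit of at least $1$, the precise (and only) consequence of (a) is the existence of at least one totally triangular $3$-valent vertex. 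The paper extracts exactly this and nothing more; your proposed ``full saturation'' reading is false (and even the hedged version does not identify the statement you actually need).

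Second, and more seriously, your plan to finish by feeding the degree restrictions from (b) into Euler's formula and $2E=\sum nV_n=\sum nF_n$ cannot succeed on its own: the counting identities leave a slack of size $V_3$ between the corner-count maximum and the threshold in (a), so for $V_3\geq 1$ they never force $V=4$ and cannot by themselves exclude large polyhedra. The missing idea is a local argument at the vertex produced in the first step. Let $v$ be a totally triangular $3$-valent vertex with neighbors $x,y,z$. Each of $x,y,z$ lies on at least two triangles of the star of $v$, hence each is either $3$-valent or is counted in $V_4^{\text{rig}}+V_5^{\text{rig}}+V_{\geq6}$; condition (b) then forces at least one of them, say $x$, to be $3$-valent. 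But then the neighbors of $x$ are exactly $v,y,z$, so $\{y,z\}$ is a $2$-cut separating $\{v,x\}$ from the rest of the graph unless the graph is all of $K_4$; $3$-connectivity (Steinitz's theorem) therefore forces $P$ to be the tetrahedron. Without this connectivity step your outline does not close, and the sporadic cases you mention (pyramids, bipyramids) are symptoms of that, not the underlying difficulty.
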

\begin{proof}
	If $P$ is a tetrahedron, the two inequalities are satisfied trivially. Let us prove the converse.
	The first inequality (a) implies that a totally triangular 3-valent vertex exists in $P$. If there is no totally triangular 3-valent vertex, then the 3-valent vertices $V_3$ have at most two adjacent triangles. So we have
	$$\aligned 3F_3 &\leq 2V_3 +4V_4^{\text{rig}}+5V_5^{\text{rig}}+\cdots +V_5^{\text{non}}+2V_6^{\text{non}}+3V_7^{\text{non}}+\cdots\\
	&=4V^{\text{rig}}+\sum_{n\geq 5}{(n-4) V_n}-2V_3.
	\endaligned$$
	
	\begin{figure}[h]
		\begin{center}
			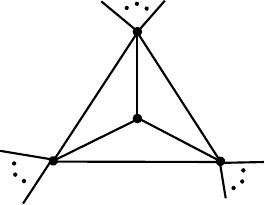
			\caption{Neighborhood of a totally triangular vertex $v$}
			\label{fig:totallytriangularvertices}
		\end{center}
	\end{figure}
	
	This contradicts  (a). Now, let us look at a totally triangular 3-valent vertex $v$ and the neighboring vertices $x$, $y$ and $z$ as in Figure \ref{fig:totallytriangularvertices}. Each vertex of them meets at least two triangle faces and hence contribute to $V_4^\text{rig}$, $V_5^\text{rig}$ or $V_{\geq6}$.
	If  inequality (b) holds, at least one of $x$,$y$ and $z$  should be 3-valent.  However,   if one or two vertices of $x$, $y$ and $z$  are 3-valent,  the planar graph of $P$ cannot be 3-connected. Therefore, it  contradicts  Steinitz's theorem  unless $x$,$y$ and $z$ all are 3-valent. This implies $P$ itself is a tetrahedron.
\end{proof}

\begin{thm}\label{thm:P4} If a polyhedral graph $P$ has four rigid vertices, then $P$ is a tetrahedron.
\end{thm}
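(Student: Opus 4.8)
The plan is to invoke Lemma~\ref{lem:2ndIneqVrig} as the decisive tool: I would verify that when $V^{\text{rig}}=4$ both of its inequalities (a) and (b) are automatically satisfied, so that the lemma identifies $P$ as a tetrahedron. The entire argument thus reduces to pinning down the relevant face and vertex counts and then checking two numerical inequalities.

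First I would fix $F_3$. Putting $V^{\text{rig}}=4$ into Lemma~\ref{lem:1stIneqVrig} gives $4\le F_3\le 4$, hence $F_3=4$. Next I would determine $V_3$. Substituting $F_3=4$ into Lemma~\ref{lem:v3f3} yields $\sum_{n\ge 5}(n-4)(V_n+F_n)=V_3-4$, and since the left-hand side is non-negative this forces $V_3\ge 4$. Conversely every $3$-valent vertex is trivially rigid (it has at most three adjacent faces), so $V_3=V_3^{\text{rig}}\le V^{\text{rig}}=4$. Combining the two bounds gives $V_3=4$.

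The equality $V_3=4$ has two consequences I would then exploit. Since $V^{\text{rig}}=V_3+V_4^{\text{rig}}+V_5^{\text{rig}}+\cdots=4=V_3$, every rigid vertex is $3$-valent, so $V_4^{\text{rig}}=V_5^{\text{rig}}=\cdots=0$. Moreover $V_3-4=0$ forces $\sum_{n\ge 5}(n-4)(V_n+F_n)=0$, whence $V_n=F_n=0$ for all $n\ge 5$; in particular $V_{\ge 6}=0$. With these values, inequality (b) of Lemma~\ref{lem:2ndIneqVrig} reads $0<3$, and inequality (a) reads $-2\cdot 4+4\cdot 4+0=8<12=3F_3$. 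Both hold simultaneously, so Lemma~\ref{lem:2ndIneqVrig} concludes that $P$ is a tetrahedron.

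I expect the only real subtlety to be recognizing that one cannot finish by face--vertex counting alone. The data $V_3=F_3=4$ and $V_n=F_n=0$ for $n\ge 5$ leave the common value $V_4=F_4$ (from $2E=12+4V_4=12+4F_4$) entirely undetermined by Euler's formula, so ruling out extra quadrilateral faces and $4$-valent vertices genuinely requires a connectivity argument rather than mere counting. That difficulty is precisely what is handled inside the proof of Lemma~\ref{lem:2ndIneqVrig}(b) via Steinitz's theorem---the neighbors $x,y,z$ of a totally triangular $3$-valent vertex must all be $3$-valent---and the strategy above deliberately delegates it to that earlier lemma rather than reproving it.
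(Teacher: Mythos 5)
Your proposal is correct and follows essentially the same route as the paper: use Lemma \ref{lem:1stIneqVrig} to pin $F_3=4$, then Lemma \ref{lem:v3f3} together with $V_3\le V^{\text{rig}}=4$ to force $V_3=4$ and $V_{\ge5}=F_{\ge5}=0$, and finally invoke Lemma \ref{lem:2ndIneqVrig}. The only difference is that you spell out the numerical check of inequality (a), which the paper leaves implicit.
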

\begin{proof}
	
	Let us assume that $V^{\text{rig}} =4$. Then we get $F_3=4$ by Lemma \ref{lem:1stIneqVrig}.
	The following inequalities from Lemma \ref{lem:v3f3},
	$$4+4  \geq  V_3 + F_3= \sum_{n\geq 5}{(n-4) V_n}+\sum_{n\geq 5}{(n-4) F_n} +8 \geq 8$$
	should be  equalities.
	We have $V_5=V_6=V_7=\cdots=0$ and $F_5=F_6=F_7=\cdots=0$, so  $V_5^{\text{rig}}=0$. Moreover we get $F_3=V_3=4$ and hence  $V_4^{\text{rig}}=V^\text{rig}-V_3^\text{rig}=0$.
	
	Therefore, the two conditions of Lemma \ref{lem:2ndIneqVrig} are satisfied and it implies that the polyhedron $P$ is a tetrahedron.
\end{proof}

Before analyzing the cases for $\Pcal_5$, we need  some preparation. For  $P \in \Pcal_5$, we have the following propositions.

\begin{prop}\label{prop:p5_v5f5zero}
	% If a convex polyhedron $P$ has five rigid vertices, then
	For a polyhedral graph $P$ with five rigid vertices, we have  $$V_{\geq 5} = F_{\geq5} = 0. $$
\end{prop}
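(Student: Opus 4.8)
The plan is to prove the two manifestly equivalent assertions $V_{\geq 5}=0$ and $F_{\geq 5}=0$, which I will phrase in terms of the weighted sums $\alpha:=\sum_{n\geq 5}(n-4)V_n$ and $\beta:=\sum_{n\geq 5}(n-4)F_n$. Since $n-4\geq 1$ for $n\geq 5$, one has $\alpha=0\Leftrightarrow V_{\geq 5}=0$ and $\beta=0\Leftrightarrow F_{\geq 5}=0$, so it suffices to show $\alpha=\beta=0$. First I would record the elementary facts: Lemma \ref{lem:1stIneqVrig} with $V^{\text{rig}}=5$ gives $3\leq F_3\leq 6$, and since every $3$-valent vertex is rigid we have $V_3\leq V^{\text{rig}}=5$. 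Substituting $V_3\leq 5$ and $F_3\leq 6$ into the identity $V_3+F_3=\alpha+\beta+8$ of Lemma \ref{lem:v3f3} already yields $\alpha+\beta\leq 3$; the entire task is to upgrade this to $\alpha+\beta=0$.

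Next I would refine the triangle count underlying Lemma \ref{lem:1stIneqVrig}. Let $T$ be the number of totally triangular $3$-valent vertices. Bounding the number of triangular faces at each vertex as before, but now separating the $3$-valent vertices into the $T$ totally triangular ones (meeting $3$ triangles each) and the remaining $V_3-T$ ones (meeting at most $2$), and then eliminating $\alpha$ via $\alpha=V_3+F_3-\beta-8$ from Lemma \ref{lem:v3f3}, I obtain the lower bound
\[
T\;\geq\;2\alpha+3\beta+4-V_3 .
\]

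The heart of the argument, and the step I expect to be the main obstacle, is a second, structural, inequality of the opposite flavour. Here I use the fact isolated in the proof of Lemma \ref{lem:2ndIneqVrig} that, since $P$ is not the tetrahedron (because $V^{\text{rig}}=5$), the three neighbors of any totally triangular $3$-valent vertex are all at least $4$-valent. If $v$ is such a vertex and $x$ is one of its neighbors, the two faces along the edge $xv$ are distinct triangles at $x$; and for two \emph{different} totally triangular neighbors of $x$ the resulting triangles are pairwise distinct, because two such neighbors are non-adjacent. Hence a vertex of valence $n$ is adjacent to at most $\lfloor n/2\rfloor$ totally triangular vertices. Counting the $3T$ incidences between totally triangular vertices and their neighbors, and splitting the neighbors into the $4$-valent ones (automatically rigid, hence at most $V_4^{\text{rig}}\leq 5-V_3$ of them, each absorbing at most $2$ incidences) and the $\geq 5$-valent ones (each absorbing at most $\lfloor n/2\rfloor\leq 2(n-4)$, so at most $2\alpha$ in total), I get
\[
3T\;\leq\;2(5-V_3)+2\alpha ,\qquad\text{i.e.}\qquad 2\alpha\;\geq\;3T+2V_3-10 .
\]
The delicate points to be checked carefully are exactly the distinctness of the triangular faces contributed by distinct totally triangular neighbors (which validates the $\lfloor n/2\rfloor$ bound) and the identification of the ``cheap'' absorbers as the rigid $4$-valent vertices, whose number is controlled by $5-V_3$.

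Finally I would combine the two displayed inequalities to eliminate $T$: feeding the lower bound for $T$ into $2\alpha\geq 3T+2V_3-10$ gives $2\alpha\geq 6\alpha+9\beta+2-V_3$, that is $4\alpha+9\beta\leq V_3-2\leq 3$. As $\alpha$ and $\beta$ are non-negative integers, this forces $\beta=0$ and then $\alpha=0$, which is precisely $V_{\geq 5}=F_{\geq 5}=0$. Thus the proof reduces to two counting inequalities of opposite direction glued together, with all the genuine content concentrated in the structural second inequality.
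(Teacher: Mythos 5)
Your proof is correct, but it follows a genuinely different route from the paper. The paper argues by contradiction through Lemma \ref{lem:2ndIneqVrig}: assuming $\sum_{n\geq5}(n-4)V_n+\sum_{n\geq5}(n-4)F_n>0$, it verifies condition (a) by a short computation and condition (b) by a case analysis on $V_3\in\{2,3,4,5\}$ using the parity relation $V_3\equiv F_3\equiv 0 \pmod 2$, and then concludes $P$ would be a tetrahedron. You instead avoid the tetrahedron criterion altogether and replace the casework by a second double count: your lower bound $T\geq 2\alpha+3\beta+4-V_3$ is a quantitative sharpening of the paper's condition-(a) step (which only extracts $T\geq 1$), and your upper bound $3T\leq 2V_4^{\text{rig}}+2\alpha$ is new. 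I checked the two points you flag as delicate and they hold: since $P\in\Pcal_5$ is not a tetrahedron, the 3-connectivity argument from the proof of Lemma \ref{lem:2ndIneqVrig} shows every neighbor of a totally triangular $3$-valent vertex is at least $4$-valent; consequently two such vertices sharing a neighbor $x$ cannot be adjacent (one would be a $3$-valent neighbor of the other), so the pairs of triangles they force at $x$ are pairwise disjoint, giving the $\lfloor n/2\rfloor$ bound, and $\lfloor n/2\rfloor\leq 2(n-4)$ does hold for all $n\geq5$ (with equality at $n=5$). A $4$-valent vertex adjacent to a totally triangular vertex indeed has at least two triangular faces and is therefore rigid, so $V_4^{\text{rig}}\leq 5-V_3$ applies. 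The payoff of your version is a clean, case-free inequality $4\alpha+9\beta\leq V_3-2\leq 3$ that kills both sums at once and is self-contained; the cost is the extra structural incidence argument, whereas the paper's version recycles Lemma \ref{lem:2ndIneqVrig}, which it needs anyway for the classification of $\Pcal_4$.
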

\begin{proof}
	If there is a vertex or a face of degree greater than 4, then
	\begin{equation}\label{eq:prop_five}
	\sum_{n\geq 5}{(n-4) V_n}+\sum_{n\geq 5}{(n-4) F_n} >0
	\end{equation}
	and we will see this induces a contradiction.
	At first, by the above inequality (\ref{eq:prop_five}), we get
	$$V_3 \leq V^\text{rig} < 6 \leq 2\sum_{n\geq 5}{(n-4) V_n}+3\sum_{n\geq 5}{(n-4) F_n} +4.$$
	It implies the following by adding $-3V_3 + \sum_{n\geq 5}{(n-4) V_n} +4 V^\text{rig}$ to both sides,
	$$
	-2V_3 + 4 V^\text{rig} + \sum_{n\geq 5}{(n-4) V_n} < 3 \sum_{n\geq 5}{(n-4) V_n} + 3\sum_{n\geq 5}{(n-4) F_n} + 24 -3V_3=3F_3.$$
	Hence we get that condition (a) of Lemma \ref{lem:2ndIneqVrig} always holds.
	Secondly, by  $V^\text{rig}=5$ and Lemma \ref{lem:1stIneqVrig}, we know
	\begin{equation}\label{eq:prop:five3}
	3 \leq F_3 \leq 6 ~\text{ and }~ 2 \leq V_3 \leq 5.
	\end{equation}
	This implies that
	\begin{equation}\label{eq:prop_five2}
	\sum_{n\geq 5}{(n-4) V_n}+\sum_{n\geq 5}{(n-4) F_n} \leq 3.
	\end{equation}
	Also from the equality (\ref{eq:2ev3f3}) in the proof of Lemma \ref{lem:v3f3}, we have
	\begin{equation}\label{mod2}V_3+V_5+V_7+\cdots\equiv F_3+F_5+F_7+\cdots\equiv 0 \pmod 2\end{equation}
	
	If $V_3=2$, then $F_3=6$ by Lemma \ref{lem:v3f3} but this contradicts  the inequality (\ref{eq:prop_five}). If $V_3=3$, then $F_3=6$ and $V_5=1$ by  formula (\ref{mod2}) and $V_{\geq 6}=0$ by Lemma \ref{lem:v3f3}. Hence we know $V_4^\text{rig} + V_5^{rig} =2$ and   condition (b) of Lemma \ref{lem:2ndIneqVrig} holds.
	If $V_3\geq 4$, then $V_4^\text{rig} + V_5^\text{rig} \leq 1$ and $V_{\geq6}\leq 1$ by the inequality (\ref{eq:prop_five2}). This also satisfies  condition (b).  Therefore   assumption (\ref{eq:prop_five}) implies the two conditions of Lemma \ref{lem:2ndIneqVrig} simultaneously, hence $P$ should be a tetrahedron but this contradicts the assumption of five rigid vertices.
\end{proof}

\begin{prop}
	There are only two possibility for $P$ on $\Pcal_5$ as follows.
	\begin{enumerate}[\indent(i)]
		\item $\left( \begin{aligned}
		&V_3=2, &&V_4=n+3, &&V_{\geq5}=0\\
		&F_3=6, &&F_4=n, &&F_{\geq5}=0
		\end{aligned}\right)$ for some $n\geq 0$
		\item $\left( \begin{aligned}
		&V_3=4, &&V_4=n+1, &&V_{\geq5}=0\\
		&F_3=4, &&F_4=n+1, &&F_{\geq5}=0
		\end{aligned}\right)$ for some $n\geq 0$
	\end{enumerate}
	where $n$ is the number of non-rigid vertices.
\end{prop}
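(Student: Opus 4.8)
The plan is to leverage Proposition~\ref{prop:p5_v5f5zero}, which has already reduced us to the situation where every vertex has degree $3$ or $4$ and every face is a triangle or a quadrilateral. With $V_{\geq 5} = F_{\geq 5} = 0$, the right-hand sum in Lemma~\ref{lem:v3f3} vanishes entirely, so that identity collapses to the single relation $V_3 + F_3 = 8$. Combined with the bounds $3 \leq F_3 \leq 6$ and $2 \leq V_3 \leq 5$ already recorded in the proof of the previous proposition (inequality~(\ref{eq:prop:five3})), this confines $(V_3, F_3)$ to the four candidate pairs $(2,6)$, $(3,5)$, $(4,4)$, $(5,3)$.

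The step that eliminates half of these is the parity constraint~(\ref{mod2}): because there are no vertices or faces of odd degree $\geq 5$, it forces both $V_3 \equiv 0$ and $F_3 \equiv 0 \pmod 2$. This discards $(3,5)$ and $(5,3)$, leaving exactly the two surviving possibilities $(V_3, F_3) = (2,6)$ and $(V_3, F_3) = (4,4)$, which will become cases (i) and (ii).

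It then remains to express $V_4$ and $F_4$ in terms of the number of non-rigid vertices $n$. First I would observe that every $3$-valent vertex is automatically rigid and that, since $V_{\geq 5}=0$, every non-rigid vertex is necessarily $4$-valent; hence $n = V_4^{\text{non}}$ and $V^{\text{rig}} = V_3 + V_4^{\text{rig}} = 5$. This pins down $V_4^{\text{rig}} = 3$ in case (i) and $V_4^{\text{rig}} = 1$ in case (ii), so that $V_4 = V_4^{\text{rig}} + n$ equals $n+3$ and $n+1$ respectively. Finally, the edge-count identity $3V_3 + 4V_4 = 3F_3 + 4F_4$ drawn from~(\ref{eq:2ev3f3}) solves for $F_4$, giving $F_4 = n$ in case (i) and $F_4 = n+1$ in case (ii), with a quick substitution into Euler's formula confirming consistency. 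The argument is in essence bookkeeping with the already-established lemmas; the only genuinely load-bearing step is the parity observation, which is easy to overlook yet is exactly what rejects the two odd candidate pairs and forces precisely the two combinatorial types claimed.
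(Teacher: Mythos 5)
Your proposal is correct and follows essentially the same route as the paper: parity from~(\ref{mod2}) to force $V_3$ and $F_3$ even, the bounds~(\ref{eq:prop:five3}) together with $V_3+F_3=8$ from Proposition~\ref{prop:p5_v5f5zero} to isolate the two cases, and the edge-count identity~(\ref{eq:2ev3f3}) to relate $V_4$ and $F_4$. Your write-up merely spells out the bookkeeping (identifying $n=V_4^{\text{non}}$ and computing $V_4^{\text{rig}}$ from $V^{\text{rig}}=5$) that the paper leaves implicit.
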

\begin{proof}
	From  formula (\ref{mod2}), we can derive that $V_3$ and $F_3$ should be even numbers.
	By inequalities (\ref{eq:prop:five3}) and
	$V_3+F_3 =8$ from Proposition \ref{prop:p5_v5f5zero}, the only  possibility is $( V_3=2,~ F_3=6)$ or $(V_3=4,~ F_3=4)$, and then we can compute the relation between $V_4$ and $F_4$ using the equality (\ref{eq:2ev3f3}).
\end{proof}
Now we can clarify the elements of $\Pcal_5$.

\begin{thm}\label{thm:p5} There are only two polyhedral graphs with 5 rigid vertices. One is a 4-pyramid and the other is a 3-bipyramid.
\end{thm}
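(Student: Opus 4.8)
The plan is to begin from the two combinatorial types (i) and (ii) isolated in the preceding proposition and to show that in each the number $n$ of non-rigid vertices must vanish, leaving exactly the $3$-bipyramid and the $4$-pyramid respectively. Throughout I would exploit the structural fact that, since $V_{\geq 5}=F_{\geq 5}=0$ by Proposition \ref{prop:p5_v5f5zero}, a non-rigid vertex is precisely a $4$-valent vertex whose four faces are all quadrilaterals. In particular a non-rigid vertex is incident to no triangle, so every one of the $3F_3$ triangle-corners must sit on one of the five rigid vertices.

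For case (i), where $V_3=2$ and $F_3=6$, I would simply count corners. The two $3$-valent vertices and the three rigid $4$-valent vertices carry at most $2\cdot 3+3\cdot 4=18=3F_3$ triangle-corners, so equality forces all five rigid vertices to be totally triangular. Then every edge of a triangle joins two totally-triangular vertices and therefore borders two triangles; so the six triangles already form a closed surface, sharing no edge with any quadrilateral. Since the face-adjacency graph of a polyhedron is connected, there can be no further faces, whence $F_4=n=0$, and the unique $5$-vertex triangulation with degree sequence $(4,4,4,3,3)$ is the triangular bipyramid.

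Case (ii), with $V_3=4$, $F_3=4$ and a single rigid $4$-valent vertex $a$, is the delicate one, and the crux is to prove that $a$ is totally triangular. Here I would reuse Lemma \ref{lem:2ndIneqVrig}: its condition (b) reads $V_4^{\text{rig}}+V_5^{\text{rig}}+V_{\geq 6}=1<3$, hence holds. Because $P$ is not a tetrahedron, the argument in the proof of Lemma \ref{lem:2ndIneqVrig} then shows that $P$ has no totally-triangular $3$-valent vertex. Consequently the corner bound $3F_3\leq 4V^{\text{rig}}+\sum_{m\geq 5}(m-4)V_m-2V_3$ appearing in that proof reads $12\leq 12$, so it is an equality; this forces every $3$-valent vertex to meet exactly two triangles and the vertex $a$ to meet four, i.e.\ $a$ is totally triangular.

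Finally, once $a$ is totally triangular its four triangles fan around it along a $4$-cycle of neighbors. These neighbors lie in triangles, so none is non-rigid, and since the only rigid $4$-valent vertex is $a$ itself they must be the four $3$-valent vertices $b_1,b_2,b_3,b_4$. Each $b_i$ has degree $3$ with neighbors $a,b_{i-1},b_{i+1}$ and two fan-triangles, so its third face is a single quadrilateral; tracking the edge $b_ib_{i+1}$, which borders a fan-triangle on one side and this third face on the other, forces all four third faces to coincide in one quadrilateral $b_1b_2b_3b_4$. Thus $P$ consists of four triangles and one quadrilateral on five vertices, namely the $4$-gonal pyramid, and again $n=0$. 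I expect the main obstacle to be precisely the step establishing that $a$ is totally triangular in case (ii); without the reuse of Lemma \ref{lem:2ndIneqVrig} one would otherwise have to exclude the intermediate possibilities where $a$ meets one, two, or three triangles by a more tedious local analysis.
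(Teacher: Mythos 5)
Your proposal is correct and follows essentially the same route as the paper's proof: it splits into the two types from the preceding proposition, uses the tight triangle-corner count in each case, and in case (ii) invokes condition (b) of Lemma \ref{lem:2ndIneqVrig} to exclude a totally triangular $3$-valent vertex before forcing the rigid $4$-valent vertex to be totally triangular. Your concluding identifications (the closed triangulated surface in case (i), the explicit fan construction of the pyramid in case (ii)) are just slightly more explicit versions of the paper's connectivity argument showing $n=0$.
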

\begin{proof}
	At first, let	us check the case of  $V_3=2$,  $F_3=6$.
	All triangles are adjacent  to  only rigid vertices since	a non-rigid 4-valent vertex should not be adjacent to any triangle, i.e.
	$$3F_3 \leq 3 V_3 + 4 V_4^\text{rig} = 3\cdot 2 +4 \cdot  3 =  18 = 3F_3.$$
	The above inequality is an equality and
	it means that  all rigid vertices cannot meet any  4-gonal face. So if there is a 4-gonal face, it meets only 4-gonal faces. Since $P$ is connected, there is no 4-gonal face and the only possibility is the 3-gonal bipyramid.
	
	Secondly, for the case of  $V_3=4$, $F_3=4$ the second condition of Lemma \ref{lem:2ndIneqVrig} holds as follows, $$V_4^{\text{rig}}+V_5^{\text{rig}}+V_{\geq 6}=1+0+0<3.$$
	Therefore there is no totally triangular 3-valent vertex in this case and  each 3-valent vertex has at most  2 adjacent triangles. Similarly,
	$$3F_3 \leq 2 V_3 + 4 V_4^\text{rig} = 2\cdot 4 +4 \cdot  1 =  12 = 3F_3,$$
	and hence every 3-valent vertex meets exactly one 4-gonal face and two triangles. There are a single rigid 4-valent vertex which meets only triangles and $n$ non-rigid 4-valent vertices that meet only 4-gonal faces, so non-rigid vertices are not connected to rigid vertices. The connectedness of $P$ implies $n=0$, and hence there are only four 3-vertices, one rigid 4-valent vertex, four triangles and one 4-gonal face. Therefore $P$ must be a 4-gonal pyramid.
\end{proof}

If there are  more then 5 rigid vertices, it is easy to construct infinitely many polyhedra as follows.

\begin{thm}
	For $k\geq 6$, $\Pcal_k$ has infinitely many  combinatorial types.
\end{thm}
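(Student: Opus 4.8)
The plan is to construct, for each fixed $k \geq 6$, an explicit one-parameter family of polyhedral graphs, all having exactly $k$ rigid vertices but unboundedly many vertices, so that they realize infinitely many combinatorial types. The building block I would use is an \emph{$m$-gonal prism tower}: stack $t$ congruent $m$-gonal prisms face-to-face, obtaining a graph with $t+1$ parallel rings of $m$ vertices, two $m$-gonal end faces, and $mt$ quadrilateral lateral faces. Combinatorially this is the Cartesian product of an $m$-cycle and a path, which is planar and $3$-connected for $m \geq 3$, so by Steinitz's theorem it is a polyhedral graph; a concave-down choice of ring radii even gives an explicit strictly convex ``drum'' realization.

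First I would record the non-triangular degrees. A vertex on an interior ring (neither the top nor the bottom ring) is $4$-valent and all four incident faces are quadrilaterals, so its non-triangular degree is exactly $4$ and it is \emph{non-rigid}; there are $(t-1)m$ such vertices. A vertex on an end ring is incident to the end $m$-gon and two lateral quadrilaterals, hence has non-triangular degree at most $3$ and is rigid. Thus the plain tower satisfies $V^{\text{rig}} = 2m$ and $V^{\text{non}} = (t-1)m$, and crucially the count of rigid vertices is independent of $t$.

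To reach every $k \geq 6$ I would separate the two parities by a choice of cap. For even $k$, set $m = k/2 \geq 3$ and take the plain $m$-gonal tower, which gives $V^{\text{rig}} = 2m = k$. For odd $k$, set $m = (k-1)/2 \geq 3$ and cap exactly one end with an $m$-gonal pyramid: the new apex meets only triangles, and each vertex of the capped ring now meets two pyramid triangles and two lateral quadrilaterals, so it remains rigid with non-triangular degree $2$. Capping adds a single rigid vertex and no non-rigid one, giving $V^{\text{rig}} = 2m+1 = k$ while the interior rings are untouched and still contribute $(t-1)m$ non-rigid vertices. In both cases, letting $t \to \infty$ produces polyhedral graphs with the same fixed number $k$ of rigid vertices but strictly increasing vertex counts, hence pairwise non-isomorphic; therefore $\Pcal_k$ is infinite.

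The steps here are more bookkeeping than genuine difficulty, and the main point requiring care is pinning the rigid count to exactly $k$: one must verify that every interior vertex has non-triangular degree \emph{exactly} $4$ (so that it is counted in $V^{\text{non}}$ and never slips back into $V^{\text{rig}}$), and that every end or apex vertex has non-triangular degree at most $3$. I would also confirm $3$-connectivity of the tower and of its pyramid-capped variant, so that Steinitz's theorem guarantees each member is an honest polyhedral graph; the non-isomorphism of distinct members of a family is then immediate from their differing vertex numbers.
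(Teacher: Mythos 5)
Your construction is correct, but it differs from the paper's for $k\geq 7$. The paper also starts from the iterated $3$-gonal prisms (your $m=3$ towers), which give the infinite family in $\Pcal_6$; but to reach $k\geq 7$ it does not change the prism, instead applying $k-6$ times a local move that subdivides a triangle into three triangles by inserting a new totally triangular $3$-valent vertex. When the subdivided triangle meets only rigid vertices (e.g.\ an end triangle of the prism tower), this move adds exactly one rigid vertex and leaves every other vertex's non-triangular degree unchanged, so a single uniform increment handles all $k\geq 7$ from the one base family. You instead build a fresh family for each $k$: $m$-gonal towers with $m=k/2$ for even $k$, and pyramid-capped $m$-gonal towers with $m=(k-1)/2$ for odd $k$. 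Your bookkeeping checks out --- interior ring vertices meet four quadrilaterals and are non-rigid, end-ring and capped-ring vertices have non-triangular degree at most $3$, the apex meets only triangles, and $C_m\,\square\,P_{t+1}$ is $3$-connected (indeed realizable as a convex drum), so distinct $t$ give distinct members of $\Pcal_k$. What your route buys is a completely explicit, self-contained description of each family with an evident convex realization, and it makes transparent that all the examples are reducible (decomposable along the ring cycles), which is the phenomenon behind Conjecture~\ref{finiteconj}; what the paper's route buys is economy --- one base family plus one local move that manifestly preserves the infinitude while stepping $k$ up by one.
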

\begin{proof}
	For $\Pcal_6$, there is a  sequence of polyhedra with  exactly 6 rigid vertices, called  iterated 3-gonal prisms.
	\begin{figure}[h!]
		$$
		\vcenter{\;\;\hbox{\includegraphics[scale=0.7]{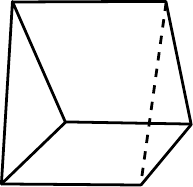}} } ~,~~
		\vcenter{\;\;\hbox{\includegraphics[scale=0.7]{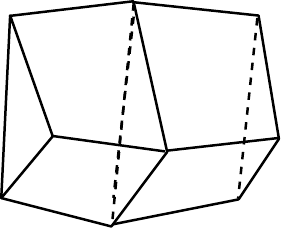}}} ~,~~
		\vcenter{\;
			\hbox{\includegraphics[scale=0.7]{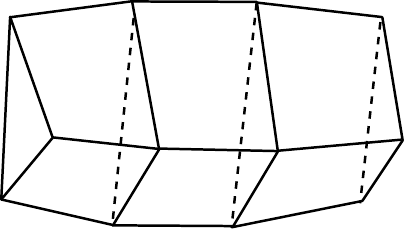}}}~,~\cdots\cdots  .
		$$
		\caption{Iterated 3-gonal prisms}
		\label{fig:iteratedprism}
	\end{figure}	
	If $k \geq 7$, we can consider a local move  which decomposes a  triangle  to three triangles and makes a new totally triangular 3-valent vertex,
	$$\vcenter{\hbox{\includegraphics[scale=0.6]{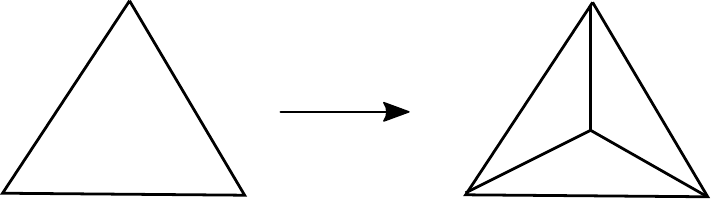}  }}$$	
	If this move is applied to a triangle adjacent to only rigid vertices, the number of rigid vertices increases by only one. Therefore, we can make an infinite family in $\Pcal_k$ for each $k\geq7$ from the iterated prisms in $\Pcal_6$.
\end{proof}

\begin{rem}\label{rmk:p6}
	In fact,  the authors also checked all combinatorial types in $\Pcal_6$.
	There are only a finite number (exactly six polyhedra  with 6 vertices) of combinatorial types except the above iterated 3-gonal prisms.
	%which are  different six polyhedra  with 6 vertices.
	Since there are only 7 polyhedra with 6 vertices (see \cite{steve_dutch_enumeration} for the classification of polyhedral graphs by the number of faces, which is dual to our cases),
	$$\Pcal_6 = \{\text{ irreducible polyhedral graph with 6 vertices}\}  \cup \{\text{ iterated 3-gonal prisms}\}.$$
	The proof of this classification of $\Pcal_6$ is similar to Theorem \ref{thm:p5}, but it is much more complicated and tedious. We don't present the proof in this article.
\end{rem}

\section{Additional results on rigid vertices}\label{sec:add}
\subsection{A lower bound}
From the  results in the previous section, one may observe that every polyhedron in $\Pcal_k$ for $k=4,5,6$ does not have any non-rigid vertex except iterated 3-gonal prisms. We can get an exact lower bound of the numbers of vertices if it has a non-rigid vertex. 
Theorem \ref{ninevertex} is an immediate consequence as a contraposition.
\begin{thm}\label{lowerbound}
	Let $\Pcal^{non}$ be the set of polyhedral graphs with a non-rigid vertex. Then, 
	$$ \min \{V(P) \mid P \in \Pcal^{non} \} =9 ,$$ where $V(P)$ is the number of vertices in $P$.
\end{thm}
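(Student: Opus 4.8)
The plan is to prove the stated equality in two halves: a lower bound $V(P)\ge 9$ valid for every $P\in\Pcal^{non}$, and the exhibition of a single polyhedron with exactly $9$ vertices that carries a non-rigid vertex. The lower bound is where the real work lies; the achievability is a construction.

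For the lower bound, let $v$ be a non-rigid vertex, so at least four of the faces incident to $v$ are non-triangular. Since the faces around $v$ are in cyclic bijection with the edges at $v$, the degree $d=\deg(v)$ satisfies $d\ge 4$. I would label the neighbours $u_1,\dots,u_d$ cyclically so that the face $f_i$ lying between the edges $vu_i$ and $vu_{i+1}$ (indices mod $d$) is a $k_i$-gon, and for each non-triangular $f_i$ set $W_i := V(f_i)\setminus\{v,u_i,u_{i+1}\}$, a set of $k_i-3\ge 1$ vertices. The whole argument reduces to one disjointness statement: the sets $\{v\}$, $\{u_1,\dots,u_d\}$, and the $W_i$ (over non-triangular $f_i$) are pairwise disjoint. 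Granting this, counting gives $V(P)\ge 1+d+\sum_{f_i\ \text{non-tri}}(k_i-3)\ge 1+4+4=9$, with an even larger bound when $d\ge 5$.

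I would establish the disjointness from the standard consequence of $3$-connectivity (Steinitz's theorem) that two distinct faces of a polyhedral graph meet in at most a single vertex or a single edge. Two faces both incident to $v$ already share $v$; hence if they shared a \emph{second} vertex $w$, their intersection would be forced to be the edge $vw$, so $vw$ would have to be an edge of each of them. Applying this to a non-triangular $f_i$ against the face $f_j$ flanking $vu_j$ rules out $W_i\cap\{u_j\}\neq\emptyset$ for $j\neq i,i{+}1$, and applying it to two non-triangular faces $f_i,f_j$ rules out $W_i\cap W_j\neq\emptyset$: in each case one reaches the contradiction that $vw$ is an edge of $f_i$ whereas the only edges of $f_i$ at $v$ are $vu_i$ and $vu_{i+1}$. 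This is the step I expect to be the main obstacle, since it is the only place where one must argue that an interior vertex of a non-triangular face cannot secretly coincide with a neighbour of $v$ or with an interior vertex of another incident face; everything else is bookkeeping with Euler's formula.

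For achievability I would exhibit an explicit convex polyhedron with $9$ vertices having a degree-$4$ vertex surrounded by four quadrilaterals. Take an apex $v$, a middle square $u_1u_2u_3u_4$ with every $u_i$ joined to $v$, and a bottom square $w_1w_2w_3w_4$ arranged so that $w_i$ sits between $u_i$ and $u_{i+1}$; declare the faces to be the four quadrilaterals $vu_iw_iu_{i+1}$, the four triangles $w_iu_{i+1}w_{i+1}$, and the bottom square $w_1w_2w_3w_4$. A direct check gives $V=9$, $F=9$, $E=16$, consistent with Euler's formula, every vertex of degree at least $3$, and $3$-connectivity; a convex realisation is obtained by placing $v=(0,0,2)$, the $u_i$ at height $1$ and the $w_i$ at height $0$. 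Here $v$ is incident to four quadrilaterals, hence non-rigid, so $V(P)=9$ is attained. Together with the lower bound this yields the minimum $9$, and Theorem \ref{ninevertex} follows immediately as the contrapositive of the lower bound.
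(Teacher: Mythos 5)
Your proof is correct and follows essentially the same route as the paper: both bound $V(P)$ from below by counting the distinct vertices of the faces incident to a non-rigid vertex ($1+\deg(v)+{}$at least one extra vertex per non-triangular face ${}\ge 1+4+4=9$) and then exhibit a $9$-vertex witness. The only differences are minor: the paper justifies the distinctness of these vertices by appealing to a convex realization as an intersection of half-spaces, whereas you derive it combinatorially from $3$-connectivity via the fact that two faces meet in at most an edge, and the paper's witness is the doubly iterated triangular prism rather than your apex-over-two-squares solid.
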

% \begin{thm}
%	If a polyhedral graph has a non-rigid vertex then the number of vertices is at least 9. Furthermore the bound is tight.
%\end{thm}

\begin{proof}%[Proof of Theorem \ref{lowerbound}]
	Let us define \emph{$i$-star} of $v$, denoted by $\st^i(v)$, as the union of $i$-cells  adjacent to $v$.  Let us  consider a Euclidean strictly-convex realization for a given polyhedral graph $P$.
	For a non-rigid vertex $v$, let us consider  $\st^2(v)$. All vertices in $\st^2(v)$ are distinct because $P$ can be realized as an intersection of half spaces in Euclidean space. Since $v$ is non-rigid, there are at most $\deg (v) - 4$ triangles.  Since $V(\st^2(v))$ should have at least 10  vertices unless $v$ is 4-valent and adjacent to only 4-gonal faces, we have $ \min \{V(P) \mid P \in \Pcal^{non} \} \geq 9 $. We already know  an example of $V(P) = 9$ which is the doubly iterated prism in $\Pcal_6$.
\end{proof}

\subsection{Regular faced polyhedra and rigid vertices}
Sometimes, we may consider a certain kind of special polyhedra with  symmetry or  transitivity.  If we check  the enumeration lists of such  polyhedra, we may recognize non-rigid vertices are very rare.
For example, let us consider  a Euclidean strictly-convex polyhedron with all regular faces. There is a complete classification: 5 Platonic solids, 13 Archimedean solids, an infinite number of prisms and anti-prisms and 92 Johnson-Zalgaller solids(see \cite{johnson_convex_1966}).
One can check the list one by one in order to prove Theorem \ref{symmetryrigid}, but we can prove it easily as follows.
%\begin{thm}
%	If a polyhedron has a Euclidean strictly-convex realization  with regular faces, then it  has only rigid vertices.
%\end{thm}
\begin{proof}[Proof of Theorem \ref{symmetryrigid}]
	The facial angle of a $n$-gonal regular face is $\frac{ (n-2) \pi} {n}$.  Suppose that there is  a non-rigid vertex $v$, then $v$ is adjacent to at least four non-triangular faces.
	$$\text{Total angle sum at $v$} \geq \sum_{f \ni v}^{\deg{f}\geq 4} \frac{ (\deg{f}-2) \pi} {\deg{f}} \geq 4 \cdot \frac{\pi}{2}=2\pi$$
	This contradicts  convexity.
\end{proof}
Since all polyhedral graphs which are  vertex- or edge- transitive have regular faced  Euclidean realizations(see \cite{fleischner_transitive_1979}), we  get the following corollary.
\begin{cor}
	If a polyhedral graph is vertex- or edge- transitive, then all vertices are rigid.
\end{cor}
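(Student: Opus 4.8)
The plan is to chain together the two ingredients that sit immediately before the corollary, rather than to argue anything from scratch. First I would invoke the realization theorem of Fleischner and Imrich \cite{fleischner_transitive_1979}, cited in the sentence preceding the corollary: every vertex-transitive or edge-transitive polyhedral graph admits a strictly-convex Euclidean realization all of whose faces are regular polygons. So, given such a graph $P$, I would fix one such regular-faced realization and treat it as a convex polyhedron in the sense of the paper.

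Next I would apply Theorem \ref{symmetryrigid} verbatim to this chosen realization: a Euclidean convex polyhedron with regular faces has only rigid vertices. This immediately yields that in the fixed realization every vertex is rigid, so the geometric work is already done and no new angle estimate is needed.

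The only point I would take care to flag is the passage from this geometric conclusion back to the combinatorial statement of the corollary. Rigidity was defined in Section \ref{intro} purely through the non-triangular degree, i.e. the number of non-triangular faces adjacent to a vertex, which is a combinatorial invariant of the polyhedral graph and is unchanged under plane isotopy or choice of geometric realization. Hence "all vertices are rigid" is a property of the abstract graph $P$ itself, and exhibiting a single realization in which it holds suffices to conclude it for $P$. I do not expect a genuine obstacle here; the corollary is essentially a one-line deduction, and the main thing to make explicit is that combinatorial rigidity is realization-independent, so testing it on the convenient regular-faced model is legitimate and settles every realization at once.
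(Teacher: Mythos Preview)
Your proposal is correct and follows exactly the paper's approach: invoke the Fleischner--Imrich realization result to obtain a regular-faced convex realization, then apply Theorem~\ref{symmetryrigid}. Your extra remark that rigidity is a purely combinatorial (hence realization-independent) notion is a useful clarification the paper leaves implicit, but the argument is otherwise the same one-line deduction.
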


%\begin{acknowledgements}
%If you'd like to thank anyone, place your comments here
%and remove the percent signs.
%\end{acknowledgements}

% BibTeX users please use one of
%\bibliographystyle{spbasic}      % basic style, author-year citations
%\bibliographystyle{spmpsci}      % mathematics and physical sciences
%\bibliographystyle{spphys}       % APS-like style for physics
%\bibliography{}   % name your BibTeX data base

% Non-BibTeX users please use

\end{document}